\newtheorem{theorem}{\bf {Theorem}}
\newtheorem{example}{\bf {Example}}
\newtheorem{proposition}{\bf {Proposition}}
\title{\Large\bf Mathematical Programming formulations for the efficient solution of the $k$-sum approval voting problem}
\author{Diego Ponce$^{1}$, Justo Puerto$^{1}$, Federica Ricca$^{2}$, Andrea Scozzari$^3$\\
\\
\normalsize $^1$IMUS and Dep. Estad{\'\i}stica e Investigaci\'on Operativa, \\ Universidad de Sevilla\\
\normalsize dponce@us.es, puerto@us.es\\
\normalsize $^2$Universit\'a di Roma, La Sapienza\\
\normalsize federica.ricca@uniroma1.it\\
\normalsize $^3$Universit\'a degli Studi Niccol\'o Cusano, Roma\\
\normalsize andrea.scozzari@unicusano.it\\}
\begin{document}
\maketitle

\begin{abstract}
In this paper we address the problem of electing a committee among a set of $m$ candidates and on the basis of the preferences of a set of $n$ voters. We consider the approval voting method in which each voter can approve as many candidates as she/he likes by expressing a preference profile (boolean $m$-vector). In order to elect a committee, a voting rule must be established to `transform' the $n$ voters' profiles into a winning committee. The problem is widely studied in voting theory; for a variety of voting rules the problem was shown to be computationally difficult and approximation algorithms and heuristic techniques were proposed in the literature. In this paper we follow an Ordered Weighted Averaging approach and study the $k$-sum approval voting (optimization) problem in the general case $1 \leq k <n$. For this problem we provide different mathematical programming formulations that allow us to solve it in an exact solution framework. We provide computational results showing that our approach is efficient for medium-size test problems ($n$ up to 200, $m$ up to 60) since in all tested cases it was able to find the exact optimal solution in very short computational times.
\vskip 7 pt
\noindent {\bf Keywords}: approval voting, Ordered Weighted Averaging (OWA), $k$-sum optimization problems.
\end{abstract}

\section{Introduction}\label{Sec1}
A typical problem in collective decision making is to select one (or more) winners among a set of $m$ candidates on the basis of the vote of a set of $n$ voters. This situation arises in many different real-life contexts, as in sport competitions to select the set of winners, or in political elections, and, more in general, whenever a committee must be formed from a larger set of candidates to represent the voters (for example, in companies or universities). For $m>1$ the problem is a {\em multi-winner} one and the ballot gives the possibility to each voter to express her/his preference for each single candidate by {\em approving} or not her/his nomination. This means that voters can approve as many candidates as they like by their preference profile ({\em approval balloting}).
Approval voting is a well-known method used for this kind of multi-winner elections. The method, introduced by Brams and Fishburn in 1978 \cite{BramsFishburn1978}, was widely studied in the literature on voting theory (see \cite{BramsFishburn2005, BramsFishburn2007, LaslierSanver2010} and the references therein).

Consider a set $N$ of $n$ voters and a set $A$ of $m$ candidates. For each $i$, $P_i=(p_{i1}, \ldots, p_{ij}, \ldots, p_{im})$ denotes the preference profile of voter $i$ which corresponds to a boolean $m$-vector whose generic element $p_{ij}$ is equal to $1$ if candidate $j$ is approved by $i$ and equal to $0$ otherwise. Therefore in the problem input we have also a set $P$ of preference profiles of the voters, so that a generic instance can be denoted by $(N,A,P)$. The problem is to find in $A$ the ``best'' subset of candidates (\emph{winning} or \emph{elected} committee), according to a certain voting rule (criterion).

When we have a single-winner election, the most accepted voting rule is to elect the candidate that has been the most approved, i.e., the one which received the largest number of votes (with a tie-breaking mechanism if needed).\\
For multi-winner elections, several voting rules have been proposed for approval voting \cite{Kilgour2010}. For a majority of the voting rules computing a winning committee is a difficult problem \cite{Aziz2015,FishburnPekec2004}. Among the many, there is a class of rules known as \emph{centralization procedures} that was widely studied in the literature. Two rules in this class were mainly analyzed, namely, one based on the {\em minisum} criterion and one on the {\em minimax} criterion. According to the first criterion, the winning committee corresponds to the subset of candidates in $A$ that minimizes the sum of the $n$ Hamming distances to the preferences profiles of the $n$ voters. The second criterion selects the subset of candidates that minimizes the maximum of its Hamming distances to the voters' profiles. Recently, a new family of rules has been proposed to generalize minisum and minimax in \cite{Amanatidis_et_al2015}, where the authors introduce a family of voting rules which makes use of {\em Ordered Weighted Averaging} operators (OWA) \cite{Yager1998}. In this setting, a vector of $n$ weights $W=(w_1, \ldots, w_n)$ is fixed; then the $n$ distances between voters' profiles and the decision vector (committee) are ordered from largest to smallest and they are weighted with the corresponding weight in $W$. Clearly, when $W=(1, 0, \ldots, 0)$ we have the minimax criterion, while for $W=(\frac{1}{n}, \ldots, \frac{1}{n})$ we have the minisum criterion. Many other criteria can be defined in this way by tuning the weights in $W$ according to the specific goal of the application. An interesting class of problems arises when vector $W$ has only $0/1$ values and more than one weight equal to 1. Suppose to have $k$ elements equal to 1; when they are in the first $k$ positions of the vector of weights they refer to the $k$ largest distances, thus providing what is known in the literature as the $k$-sum approach already applied to many other combinatorial problems \cite{NickelPuerto2005}. We have a similar problem when we have elements equal to 1 in the last $h$ positions of the weighting vector ($h$ smallest distances). Both problems have meaningful applications in approval voting.

In this paper we study the problem of selecting a committee by applying approval voting and basing on a $k$-sum objective function ($k$-sum approval voting problems). In \cite{Amanatidis_et_al2015} it is proved that for $1 \leq  k < n$ the problem is NP-hard and, therefore, an approximation algorithm is provided to get feasible solutions with guaranteed bounds. On the other hand, the same authors provide polynomial time exact algorithms for some families of weighting vectors that consider the $h$ smallest distances ($h$ fixed). In the present paper we study these kind of problems under a mathematical programming viewpoint, providing different exact formulations for the $k$-sum approval voting problem, with $1 \leq k < n$. We then exploit these formulations to develop exact solution procedures that may be used to solve medium-size problems at optimality, or to efficiently find a sub-optimal solution when the size of the problem is too large. To develop such formulations we rely on the general approach for solving $k$-sum optimization problems provided in \cite{OT03,PuertoRodriguezChiaTamir2017} and on results in \cite{PuertoBlanco2014}. We experimentally study the solution of our $k$-sum approval voting problem by using the above formulations in a Branch \& Bound framework. All formulations were tested on a variety of medium-size randomly generated test problems, in all cases providing the exact optimum in very short times. In view of this, our formulations also provide an efficient tool to certify optimality of a solution.

We apply the mathematical programming approach also when the $h$ smallest distances are considered in the objective function. We formulate this problem as a polynomial sequence of linear program, thus providing a formal proof that it can be solved in polynomial time as already established in \cite{Amanatidis_et_al2015}.

The paper is organized as follows. Section \ref{Sec2} formally defines the problem and sets the notation. Section \ref{Sec_2} presents our mathematical programming formulations for the $k$-sum approval voting problem. We have developed two different types of formulations. The first ones based on an exponential number of constraints that can be separated efficiently (see Section  \ref{sec3_1}) and the second ones based on compact representations of $k$-sums (see Section \ref{sec3_2}). In Section \ref{sec4} we describe how all the above mentioned formulations can be strengthened with variable fixing and valid inequalities. The computational experiments are reported in Section \ref{s:comp-res}. There we compare the performance of the formulations on two different set of instances showing its usefulness in solving the problem for instances of medium to large sizes. Finally, Section \ref{sec6} contains our concluding remarks.

\section{Problem definition and basic results}\label{Sec2}

Consider an instance of the $k$-sum approval voting problem $(N, A, P)$. For a given committee $x$ (i.e., a boolean vector $x$ of length $m$) we compute the Hamming distance between $x$ and each voter profile $P_i$, $i=1,\ldots,n$ thus obtaining the Hamming distance $d_i(x)$ for each voter $i$. Following the OWA approach, we consider a family of functions, parameterized by a vector of length $n$ that maps a vector of distances $(d_1(x), \ldots, d_n(x))$ to an aggregated function $D(x)$ (OWA score). The $k$-sum approval voting problem can be stated as follows: select a committee $x$ minimizing the OWA score of Hamming distances $D(x)$. In this paper we study two families of $k$-sum approval voting problems. The first computes the OWA score using the following vector of weights:

$$W(k)=(1,\ldots,1,0,\ldots,0),$$

\noindent where $k$ refers to the number of ones in the first $k$ positions of vector $W$ (electing a committee that minimizes the sum of the $k$ largest Hamming distances).

\noindent The second family uses the following weighting vector:

$$M(n-h)=(0,\ldots, 0,1,\ldots,1),$$

\noindent where $h$ weights equal to 1 are in the last $h$ positions (electing a committee that minimizes the sum of the $h$ smallest Hamming distances).
Clearly, the cases $k=h=n$ are the same, and, in this case, we have the same OWA problem, that, in fact, corresponds to the minisum problem.
\vskip 8 pt
\noindent In \cite{Brams_et_al2007} (see Proposition 4) the authors account for why the minisum problem is polynomially solvable, but they do not provide a formal proof. The key idea is that the minisum winning committee, in particular under a cardinality constraint on the size of the committee fixed to $C$, corresponds to a set of $C$ candidates receiving the most votes. In the following, using Linear Programming (LP), we give a formal proof that the problem is polynomial when $k=n$. Denote by $\gamma_j$ the number of votes for candidate $j$, $j=1,\ldots,m$, we have

$$\gamma_j=\sum\limits_{i=1}^n p_{ij}.$$

\vskip 8 pt
\noindent Consider the case when the size of the committee is not given. A valid formulation can be obtained basing on the following observation (see \cite{Brams_et_al2007}): when $k=n$, all voters' Hamming distances $(d_1(x), \ldots, d_n(x))$ are considered in the objective function, so that a candidate $j$ is elected if the number of votes for her/him $\gamma_j$ is greater than $n-\gamma_j$. This leads to the following model

\begin{align*}
\min & \; \sum_{j=1}^m (n-\gamma_j)x_j+\sum_{j=1}^m \gamma_j(1-x_j) \\
\mbox{s.t. \;} & x\in \{0,1\}^m. \nonumber
\end{align*}

\vskip 8 pt
\noindent Since the objective function is linear, the optimal solution is attained at some vertex of the $m$-dimensional hypercube. Then, we can relax the binary variables of the above problem obtaining the LP model

\begin{align*}
\min & \; \sum_{j=1}^m (n-\gamma_j)x_j+\sum_{j=1}^m \gamma_j(1-x_j) \\
\mbox{s.t. \;} & 0\le x \le 1. \nonumber
\end{align*}

\vskip 8 pt
\noindent As in \cite{Brams_et_al2007}, we can also consider a constraint on the size $C$ of the committee, obtaining the following model

\begin{equation}\label{LP}
\begin{array}{lcl}
\min & \; \sum\limits_{j=1}^m (n-\gamma_j)x_j+\sum\limits_{j=1}^m \gamma_j(1-x_j) \\
\mbox{s.t. \;} &  \sum\limits_{j=1}^m x_j = C\\
& {x\in \{0,1\}^m}.
\end{array}
\end{equation}

\vskip 8 pt
\noindent Since the constraint matrix is  again Totally Unimodular (TU), the problem can be still solved by Linear Programming techniques, after relaxing the binary constraints on the variables $x$ (see \cite{NemhauserWolsey1988}). This definitely shows that the minisum case is polynomially solvable.
\newline


When $k < n$ the $k$-sum approval voting problem is NP-hard (see \cite{Amanatidis_et_al2015, FrancesLitman1997}). This justifies the idea of studying efficient solution methods for the general problem resorting to heuristic approaches, or approximation algorithms \cite{ByrkaSornat2010,Caragiannis_et_al_2010,LeGrand_et_al_2002,Li_et_al_2002}. In the following sections we present a number of exact mathematical  programming formulations of the general $k$-sum approval voting problem with $1\leq k<n$ that can be efficiently solved in a Branch \& Bound framework enriched by the use of valid inequalities.

A similar problem arises when we consider the weighting vector $M(n-h)$ with $1\leq h<n$. The computational complexity of this problem was established in \cite{Amanatidis_et_al2015} when $h$ is not part of the input of the problem. Even if it is shown that the problem is polynomially solvable in this case, to the best of our knowledge, the computational complexity is still open when the problem is formulated with a general $h$. We discuss this problem in the final section of the paper.

\section{Mathematical Programming formulations for the Approval Voting problem}\label{Sec_2}

\noindent Consider now $W(k)$ with $1\leq k<n$. Let $x$ be a committee, the Hamming distance, $d_i(x)$, between the profile $P_i$ of voter $i$ and $x$ is given by

$$d_i(x)=\sum\limits_{j=1}^m |p_{ij}-x_j|.$$

\noindent Since both $P_i$, $i=1,\ldots,n$, and $x$ are boolean vectors, we can exploit the fact that the Hamming distance between two boolean vectors corresponds to the vector of the \emph{Exclusive-or} between the vector elements \cite{GivantHalmos2009}. Thus the Hamming distance $d_i(x)$ can be rewritten as follows

\begin{equation}\label{New_Hamming}
\begin{array}{lcl}
&d_i(x)=\sum\limits_{j=1}^m z_{ij}\\
& z_{ij}\geq x_j-p_{ij}x_j&\;j=1,\ldots,m  \\
& z_{ij}\geq p_{ij}-p_{ij}x_j&\;j=1,\ldots,m.
\end{array}
\end{equation}

\vskip 8 pt
We can also replace the two inequalities above by the equivalent representation:
$ z_{ij}\geq x_j(1-p_{ij})+p_{ij}(1-x_j).$
This gives rise to:
\begin{align}\label{New_Hamming1}
d_i(x)=&\sum\limits_{j=1}^m z_{ij}\\
& z_{ij}\geq x_j(1-p_{ij})+p_{ij}(1-x_j), \quad j=1,\ldots,m.
\end{align}

%

\noindent Let $\sigma_x: \{1, \ldots,n\} \rightarrow \{1, \ldots,n\}$ be an ordering function that, for a given $x$, provides a permutation of the voters' indices such that $d_{\sigma_x(1)}(x)\geq d_{\sigma_x(2)}(x)\geq \ldots\geq d_{\sigma_x(n)}(x)$. For the given permutation the problem of electing a committee that minimizes the sum of the $k$ largest distances can be formulated as follows:

\begin{equation}\label{First}
\begin{array}{lcl}
\min& \sum\limits_{h=1}^k d_{\sigma_x(h)}(x)\\
\\
& {x\in \{0,1\}^m}.
\end{array}
\end{equation}
\vskip 8 pt

\noindent Following the approach in \cite{PuertoBlanco2014}, the above problem can be restated as:

\begin{equation}\label{general_3}
\begin{array}{lcl}
\min\limits_{x\in \{0,1\}^m} \big(\max \big\{ d_{S}(x) \; |\; S\subset \{ 1,\ldots,n \}, |S|=k \big\} \big),\\
\end{array}
\end{equation}
where $d_{S}(x) =\sum\limits_{i\in S} \sum\limits_{j=1}^m | p_{ij}-x_j|$, is the Hamming distance of the set of voters in $S$ to the committee represented by $x$.

In general $k$-sum problems, the expression of the contribution of a subset of voters to the election of a candidate can be also computed by means of the $\gamma_j$ values, namely the number of voters approving a given candidate $j$. For this purpose, let us introduce some necessary notation. More formally, let $S$ be a set of voters such that $|S|=k$. We define $\gamma_j(S)=\sum_{i\in S} p_{ij}$, as the number of votes of candidate $j$ by the voters in $S$. For a given $x$ and $S$ such that $|S|=k$, we can compute:

\begin{align}d_S(x)& =\sum\limits_{j=1}^m\max\{\gamma_j(S)(1-x_j),(k-\gamma_j(S))x_j\} \label{GenDis}\\
& =\sum_{j=1}^m \gamma_j(S)(1-x_j)+\sum_{j=1}^m (k-\gamma_j(S))x_j, \label{GenDis1}
\end{align}

\noindent i.e., the Hamming score of the  $k$ voters in $S$ computed w.r.t. a given solution $x$. Notice that by means of expressions (\ref{GenDis}) and (\ref{GenDis1}) the score is well calculated even when the solution is not optimal. Basing on these expressions, in the following sections we obtain alternative valid formulations of the $k$-sum  approval voting problem that we then test experimentally in Section \ref{s:comp-res}.

\subsection{Valid formulations based on subsets of size $k$}\label{sec3_1}

In this section we propose a first valid formulation for our $k$-sum approval voting problem which is based on expression (\ref{GenDis}). We formulate it in the following theorem where, for the sake of simplicity, we avoid specifying $S\subset \{1,\ldots,n\}$ when not necessary.

\begin{theorem} \label{th:form1}
An optimal solution of the $k$-sum approval voting problem can be obtained solving the following integer programming problem.
\begin{align}\label{GeneralProblemDis}
\min\; & v\\
\mbox{ s.t. } & z_{ij}\ge p_{ij}(1-x_{j})\quad \forall i,j  \noindent \label{GeneralProblemDis:const1}\\
&z_{ij}\ge (1-p_{ij})x_{j}\quad \forall i,j   \noindent \label{GeneralProblemDis:const2}\\
&\sum\limits_{j=1}^m\sum_{i\in S}z_{ij}\le v\quad \forall S: |S|=k \noindent \label{GeneralProblemDis:const3}\\
& {x\in \{0,1\}^m}.\noindent \label{GeneralProblemDis:const5}
\end{align}
\end{theorem}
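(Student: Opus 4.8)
The plan is to verify that the auxiliary variables $z_{ij}$ in (\ref{GeneralProblemDis}) encode the per‑coordinate Hamming contributions $|p_{ij}-x_j|$ and that the variable $v$ is then forced to equal the $k$‑sum objective, so that minimizing $v$ solves (\ref{general_3}). Throughout I will use the reformulations (\ref{New_Hamming}) and (\ref{general_3}) already established above, without re‑deriving them.

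First I would fix a committee $x\in\{0,1\}^m$ and analyze the $z$‑variables. Constraints (\ref{GeneralProblemDis:const1})--(\ref{GeneralProblemDis:const2}) are precisely the inequalities of (\ref{New_Hamming}) and impose $z_{ij}\ge\max\{p_{ij}(1-x_j),(1-p_{ij})x_j\}$; a short case check over the four values of $(p_{ij},x_j)\in\{0,1\}^2$ shows that this lower bound equals $|p_{ij}-x_j|$. Since in (\ref{GeneralProblemDis}) the variable $v$ is minimized and each $z_{ij}$ appears with a nonnegative coefficient only in the $\le$‑constraints (\ref{GeneralProblemDis:const3}), besides its own lower‑bound constraints (\ref{GeneralProblemDis:const1})--(\ref{GeneralProblemDis:const2}), lowering any $z_{ij}$ to its least feasible value $|p_{ij}-x_j|$ preserves feasibility and does not increase $v$. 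Hence, for every fixed $x$, there is an optimal completion with $z_{ij}=|p_{ij}-x_j|$ for all $i,j$. I expect this reduction to be the only point requiring a genuine (if brief) argument, the remainder being substitution.

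With $z_{ij}=|p_{ij}-x_j|$, the left‑hand side of (\ref{GeneralProblemDis:const3}) becomes $\sum_{j=1}^m\sum_{i\in S}|p_{ij}-x_j|=\sum_{i\in S}\sum_{j=1}^m|p_{ij}-x_j|=d_S(x)$ for every $S$ with $|S|=k$, so the family (\ref{GeneralProblemDis:const3}) is equivalent to the single bound $v\ge\max\{d_S(x):|S|=k\}$, and minimizing $v$ turns it into an equality. Equivalently, one may pass through (\ref{GenDis}), writing $d_S(x)=\sum_{j=1}^m\max\{\gamma_j(S)(1-x_j),(k-\gamma_j(S))x_j\}$, and reach the same conclusion.

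Finally I would assemble the two directions. On one hand, any committee $x$ lifts to a feasible point of (\ref{GeneralProblemDis}) with $v=\max\{d_S(x):|S|=k\}$ by taking $z_{ij}=|p_{ij}-x_j|$; on the other hand, every feasible $(x,z,v)$ of (\ref{GeneralProblemDis}) satisfies $v\ge\max\{d_S(x):|S|=k\}$ by the previous steps. Therefore the optimal value of (\ref{GeneralProblemDis}) equals $\min_{x\in\{0,1\}^m}\max\{d_S(x):|S|=k\}$, and the $x$‑component of any optimal solution of (\ref{GeneralProblemDis}) is an optimal committee. Since by (\ref{general_3}) this min‑max value coincides with the minimum over $x$ of the sum of the $k$ largest Hamming distances, that is the objective of (\ref{First}), the integer program (\ref{GeneralProblemDis}) indeed yields an optimal solution of the $k$‑sum approval voting problem.
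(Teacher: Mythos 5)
Your proof is correct, and it reaches the conclusion by a somewhat different route than the paper. The paper \emph{derives} the formulation from (\ref{general_3}) through a chain of reformulations: it first introduces subset-indexed variables $z_{Sj}$ with constraints $z_{Sj}\ge\gamma_j(S)(1-x_j)$ and $z_{Sj}\ge(k-\gamma_j(S))x_j$, then disaggregates $z_{Sj}=\sum_{i\in S}z_{ij}$, and finally asserts that the aggregated constraints can be replaced by the per-voter constraints (\ref{GeneralProblemDis:const1})--(\ref{GeneralProblemDis:const2}). You instead \emph{verify} the final formulation directly: fix $x\in\{0,1\}^m$, observe that the constraints force $z_{ij}\ge|p_{ij}-x_j|$, argue that at optimality one may take equality there because lowering $z_{ij}$ only relaxes the $\le v$ constraints, and conclude that the minimal feasible $v$ is $\max\{d_S(x):|S|=k\}$, so the integer program computes exactly $\min_x\max_{|S|=k}d_S(x)$, which is (\ref{general_3}). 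What your approach buys is that it makes explicit the one nontrivial point that the paper's last step glosses over: the disaggregated constraints define a \emph{smaller} feasible region than the aggregated ones (summing (\ref{GeneralProblemDis:const1})--(\ref{GeneralProblemDis:const2}) over $i\in S$ recovers (\ref{GeneralProblem:const1})--(\ref{GeneralProblem:const2})), so replacing one family by the other requires showing that no optimal value is lost, which is precisely your observation that $z_{ij}=|p_{ij}-x_j|$ is feasible and optimal for every fixed $x$. The paper's derivation, in exchange, exhibits the intermediate formulation (\ref{GeneralProblemv1})--(\ref{GeneralProblemv1:const3}) and motivates where the $\gamma_j(S)$ expressions come from, which you bypass entirely by working with per-coordinate absolute values. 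Both arguments are sound; yours is the more self-contained justification of the theorem as stated.
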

\begin{proof}
Applying (\ref{GenDis}) in formula (\ref{general_3}) gives

$$ \min_{x\in \{0,1\}^m}\left\{\sum_{j=1}^m  \max \{(k-\gamma_j(S))x_j,\gamma_j(S)(1-x_j)\}| S\subset \{1,\dots,n\}, \; |S|=k \right\}.$$

\noindent Defining variables $z_{Sj}$ for all $S\subset \{1,\ldots,n\}$, $|S|=k$, and all $j$, this is equivalent to
\begin{align}\label{GeneralProblemv1}
\min \; & v\\
\mbox{s.t. } &z_{Sj}\ge \gamma_j(S)(1-x_{j})&\forall j,\;\forall S: |S|=k  \noindent \label{GeneralProblemv1:const1}\\
&z_{Sj}\ge (k-\gamma_j(S))x_{j}&\forall j,\;\forall S: |S|=k \noindent \label{GeneralProblemv1:const2}\\
&\sum\limits_{j=1}^mz_{Sj}\le v&\forall S: |S|=k  \noindent \label{GeneralProblemv1:const3}\\
& {x\in \{0,1\}^m}. \noindent
\end{align}
Next, we can define variables $z_{ij}$ for all $i=1,\ldots,n$ and for all $j=1,\ldots,m$ and disaggregate each variable $z_{Sj}=\sum_{i\in S} z_{ij}$, which results in

\begin{align}\label{GeneralProblem}
\min\; & v\\
\mbox{ s.t. } & \sum_{i\in S}z_{ij}\ge \gamma_j(S)(1-x_{j})&\forall j,\;\forall S: |S|=k  \noindent \label{GeneralProblem:const1}\\
&\sum_{i\in S}z_{ij}\ge (k-\gamma_j(S))x_{j}&\forall j,\;\forall S: |S|=k  \noindent \label{GeneralProblem:const2}\\
&\sum\limits_{j=1}^m\sum_{i\in S}z_{ij}\le v&\forall S: |S|=k \noindent \label{GeneralProblem:const}\\
& {x\in \{0,1\}^m}.\label{GeneralProblem:const3}
\end{align}
\vskip 8 pt
\noindent We observe that constraints (\ref{GeneralProblem:const1}) and (\ref{GeneralProblem:const2}) for all $j$ and $S$ such that $|S|=k$, can be replaced by the following disaggregated version
\begin{align*}
\min\; & v\\
\mbox{ s.t. } & z_{ij}\ge p_{ij}(1-x_{j})\quad \forall i,j  \noindent \\
&z_{ij}\ge (1-p_{ij})x_{j}\quad \forall i,j   \noindent \\
&\sum\limits_{j=1}^m\sum_{i\in S}z_{ij}\le v\quad \forall S: |S|=k \noindent \\
& {x\in \{0,1\}^m}.\nonumber
\end{align*}
This concludes the proof.
\end{proof}

\begin{example}
We illustrate the above approach reformulating the minimax approval voting problem ($k=1$) within this general framework.

\begin{equation}\label{CenterVoting}
\begin{array}{lcl}
\min\quad v\\
\\
&z_{ij}\ge p_{ij}(1-x_{j})&\forall i,j\\
&z_{ij}\ge (1-p_{ij})x_{j}&\forall i,j\\
&\sum\limits_{j=1}^mz_{ij}\le v&\forall i\\
& {x\in \{0,1\}^m}.
\end{array}
\end{equation}
\end{example}

In the following, we develop a second valid formulation for the general $k$-sum approval voting problem applying (\ref{general_3}) but using (\ref{GenDis1}) to represent the Hamming distance instead of (\ref{GenDis}).

\begin{theorem} The following formulation provides a valid representation of the $k$-sum approval voting problem.
\begin{align}
\min & \; v \label{GeneralProblemDis1}\\
\mbox{s.t. \;} & \sum_{j=1}^m (k-\gamma_j(S))x_j+\sum_{j=1}^m \gamma_j(S)(1-x_j)\le v \quad  \forall S: |S|=k \label{GeneralProblemDis1:const1}\\
& x\in \{0,1\}^m. \label{GeneralProblemDis1:const2}
\end{align}
\end{theorem}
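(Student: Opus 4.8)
The plan is to obtain this formulation directly from the intermediate reformulation~(\ref{general_3}), this time substituting the closed-form expression~(\ref{GenDis1}) for $d_S(x)$ rather than the $\max$-expression~(\ref{GenDis}) used in the proof of Theorem~\ref{th:form1}. First I would recall that~(\ref{general_3}) states that the $k$-sum approval voting problem is equivalent to
$$\min_{x\in\{0,1\}^m}\Big(\max\big\{d_S(x)\ :\ S\subset\{1,\dots,n\},\ |S|=k\big\}\Big).$$
By~(\ref{GenDis1}), for every $x\in\{0,1\}^m$ and every $S$ with $|S|=k$ we have the identity
$$d_S(x)=\sum_{j=1}^m (k-\gamma_j(S))x_j+\sum_{j=1}^m \gamma_j(S)(1-x_j),$$
so the inner maximum is exactly the maximum over all $k$-subsets $S$ of the right-hand side.

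Next I would perform the standard epigraph linearization of the $\min$–$\max$: introduce a scalar variable $v$ that upper-bounds the objective, so that minimizing the $\max$ over $S$ is the same as minimizing $v$ subject to one inequality $d_S(x)\le v$ for each $S$ with $|S|=k$. Substituting the expression above for $d_S(x)$ yields precisely constraints~(\ref{GeneralProblemDis1:const1}), and the variable domain~(\ref{GeneralProblemDis1:const2}) is inherited unchanged from~(\ref{general_3}). The equivalence in both directions is immediate: given an optimal committee $x^\*$, setting $v=\max_{|S|=k} d_{S}(x^\*)$ is feasible with objective value equal to the optimal $k$-sum score; conversely, any feasible $(x,v)$ satisfies $v\ge\max_{|S|=k} d_S(x)=\sum_{h=1}^k d_{\sigma_x(h)}(x)$, so the optimal $v$ of the program cannot be smaller than the optimal $k$-sum value. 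Hence the two optimal values coincide and an optimal $x$ of the program is an optimal committee.

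The only point that genuinely needs justification — and which I expect to be the main (mild) obstacle — is the validity of the identity $d_S(x)=\sum_{j=1}^m (k-\gamma_j(S))x_j+\sum_{j=1}^m \gamma_j(S)(1-x_j)$ for \emph{all} feasible $x$, not merely at an optimum; but this is already asserted in the remark following~(\ref{GenDis1}) (``the score is well calculated even when the solution is not optimal''), and it follows termwise: for a fixed coordinate $j$, if $x_j=1$ the summand contributes $k-\gamma_j(S)$, which counts exactly the voters in $S$ who did \emph{not} approve $j$ and thus disagree with $x$ in coordinate $j$, while if $x_j=0$ it contributes $\gamma_j(S)$, the number of voters in $S$ who approved $j$ and hence disagree in that coordinate; summing over $j$ reproduces $\sum_{i\in S}\sum_{j=1}^m|p_{ij}-x_j|=d_S(x)$. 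With this identity in hand, the rest is the routine epigraph argument sketched above, and the proof concludes.
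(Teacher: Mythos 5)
Your proposal is correct and follows essentially the same route as the paper, which simply notes that the result is obtained by applying representation (\ref{GenDis1}) instead of (\ref{GenDis}) in formula (\ref{general_3}) and proceeding as in the proof of Theorem \ref{th:form1}. Your explicit termwise verification of the identity for $d_S(x)$ and the epigraph argument fill in details the paper leaves implicit, but no new idea is involved.
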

\begin{proof}
Applying in formula (\ref{general_3}) the representation (\ref{GenDis1}) instead of  (\ref{GenDis}), the proof follows similarly to that of Theorem \ref{th:form1}.
\end{proof}

\noindent Since both formulations (\ref{GeneralProblemDis})-(\ref{GeneralProblemDis:const5}) and (\ref{GeneralProblemDis1})-(\ref{GeneralProblemDis1:const2}) have an exponential number of constraints, we propose here two different approaches to solve these two models.
\newline

A first approach is based on formulation (\ref{GeneralProblemDis})-(\ref{GeneralProblemDis:const5}).  Let us assume that we embed that formulation in a Branch and Bound scheme and let $(\hat x, \hat z, \hat v)$ be the current solution in a node of this Branch \& Bound tree.

\begin{enumerate}
\item[]\textbf{Procedure for (\ref{GeneralProblemDis})-(\ref{GeneralProblemDis:const5})}
\begin{itemize}
\item Compute $\hat r_i:=\sum_{j=1}^m \hat z_{ij}$ for every $i$ and choose the $k$ largest. Determine $S$ according to the $k$ largest $\hat r_i$ for $i\in \{1,\ldots,n \}$.

\item Check if  $\sum_{i\in S}\hat r_i>\hat v$. In the affirmative case, we need to add the following constraint related to such $S$
\begin{equation}\label{Cutting plane}
\sum_{i\in S} \sum\limits_{j=1}^m z_{ij}\le v.
\end{equation}
Otherwise, i.e. when the answer to the test is no,  the current solution is feasible in the current node. Therefore,  a valid description of the problem was already available and no more inequalities have to be added.
\end{itemize}
\end{enumerate}

A similar scheme can be applied to Problem (\ref{GeneralProblemDis1})-(\ref{GeneralProblemDis1:const2}). Let $(\hat x,\hat v)$ be a given feasible solution in a node of its Branch \& Bound tree.

\begin{enumerate}
\item[] \textbf{Procedure for (\ref{GeneralProblemDis1})-(\ref{GeneralProblemDis1:const2})}
\begin{itemize}
\item Compute  $\hat r_{i}:=\sum_{j=1}^m |\hat x_j -p_{ij}|$, for all $i=1,\dots,n$. Determine $S$ according to the $k$ largest values of $\hat r_i$ for $i\in \{1,\ldots,n \}$.
    \item Check whether $\sum_{i\in S} \hat r_i >\hat v$. In the affirmative case we need to add the following inequality which is a valid cut that separates $(\hat x,\hat v)$
    $$ \sum_{j=1}^m \gamma_j(S)(1-x_j)+\sum_{j=1}^m (k-\gamma_j(S))x_j \le v. $$
\end{itemize}
\end{enumerate}

\begin{proposition}
Formulation (\ref{GeneralProblemDis1})-(\ref{GeneralProblemDis1:const2}) is at least as good as formulation (\ref{GeneralProblemDis})-(\ref{GeneralProblemDis:const5}).
\end{proposition}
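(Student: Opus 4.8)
The statement is about formulation strength, so I read it as: the linear programming relaxation of (\ref{GeneralProblemDis1})--(\ref{GeneralProblemDis1:const2}) gives a bound at least as tight as that of (\ref{GeneralProblemDis})--(\ref{GeneralProblemDis:const5}); since both problems minimize $v$, this means the optimal LP value of the former is not smaller than that of the latter. The plan is to establish the following extension property and then invoke it: \emph{every point $(\hat x,\hat v)$ feasible for the LP relaxation of (\ref{GeneralProblemDis1})--(\ref{GeneralProblemDis1:const2}) (so $\hat x\in[0,1]^m$) can be completed with suitable numbers $\hat z_{ij}$ to a point $(\hat x,\hat z,\hat v)$ feasible for the LP relaxation of (\ref{GeneralProblemDis})--(\ref{GeneralProblemDis:const5}), without altering $\hat x$ or $\hat v$}. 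Once this is proved, applying it to an LP-optimal solution of (\ref{GeneralProblemDis1})--(\ref{GeneralProblemDis1:const2}) yields a feasible solution of the LP relaxation of (\ref{GeneralProblemDis})--(\ref{GeneralProblemDis:const5}) with the same value of $v$, whence the optimal value of the latter relaxation is at most that of the former, which is the claim.

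To carry out the extension I would set $\hat z_{ij}:=p_{ij}(1-\hat x_j)+(1-p_{ij})\hat x_j$ for all $i,j$. Because $p_{ij}\in\{0,1\}$ and $\hat x_j\in[0,1]$, one of the two numbers $p_{ij}(1-\hat x_j)$, $(1-p_{ij})\hat x_j$ equals $\hat z_{ij}$ (the one selected by the value of $p_{ij}$) while the other equals $0$; hence $\hat z_{ij}=\max\{p_{ij}(1-\hat x_j),(1-p_{ij})\hat x_j\}\ge 0$, so (\ref{GeneralProblemDis:const1}) and (\ref{GeneralProblemDis:const2}) hold. It then remains to verify (\ref{GeneralProblemDis:const3}): for any $S$ with $|S|=k$,
\begin{align*}
\sum_{j=1}^m\sum_{i\in S}\hat z_{ij}&=\sum_{j=1}^m\Big((1-\hat x_j)\sum_{i\in S}p_{ij}+\hat x_j\sum_{i\in S}(1-p_{ij})\Big)\\
&=\sum_{j=1}^m\gamma_j(S)(1-\hat x_j)+\sum_{j=1}^m\bigl(k-\gamma_j(S)\bigr)\hat x_j,
\end{align*}
using $\sum_{i\in S}(1-p_{ij})=|S|-\gamma_j(S)=k-\gamma_j(S)$. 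By feasibility of $(\hat x,\hat v)$, constraint (\ref{GeneralProblemDis1:const1}) tells us this quantity is $\le\hat v$, so (\ref{GeneralProblemDis:const3}) holds for every $S$ and the completion is LP-feasible for (\ref{GeneralProblemDis})--(\ref{GeneralProblemDis:const5}), as required.

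I do not expect a genuine obstacle: the only point needing care is the elementary identity $\max\{p_{ij}(1-x_j),(1-p_{ij})x_j\}=p_{ij}(1-x_j)+(1-p_{ij})x_j$, valid precisely because $p_{ij}$ is $0/1$ and the relaxed $x_j$ lies in $[0,1]$. This is exactly what makes the two lower bounds on $z_{ij}$ coincide with the single linear term that is aggregated in (\ref{GeneralProblemDis1:const1}), and it is also what guarantees that a single choice of $\hat z_{ij}$ (namely this common value) works simultaneously for all the exponentially many subsets $S$. As a remark, the same computation run in reverse — given an LP-feasible $(\hat x,\hat z,\hat v)$ of (\ref{GeneralProblemDis})--(\ref{GeneralProblemDis:const5}), use $\hat z_{ij}\ge p_{ij}(1-\hat x_j)+(1-p_{ij})\hat x_j$ and sum over $i\in S$ and over $j$ — yields the reverse inequality too, so the two LP bounds are in fact equal; the proposition records only the direction that is needed in the sequel.
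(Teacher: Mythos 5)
Your argument is correct, but it runs in the opposite direction to the paper's own proof, and the two are not interchangeable. The paper takes a feasible point $(x,v,z)$ of the relaxation of (\ref{GeneralProblemDis})--(\ref{GeneralProblemDis:const5}) and observes that its projection onto $(x,v)$ is feasible for the relaxation of (\ref{GeneralProblemDis1})--(\ref{GeneralProblemDis1:const2}); that is the inclusion $\mathrm{proj}_{(x,v)}P_{(\ref{GeneralProblemDis})-(\ref{GeneralProblemDis:const5})}\subseteq P_{(\ref{GeneralProblemDis1})-(\ref{GeneralProblemDis1:const2})}$, which by itself only shows that the aggregated formulation is \emph{not stronger}. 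You prove the converse inclusion: every $(\hat x,\hat v)\in P_{(\ref{GeneralProblemDis1})-(\ref{GeneralProblemDis1:const2})}$ extends, via $\hat z_{ij}=p_{ij}(1-\hat x_j)+(1-p_{ij})\hat x_j$, to a point of $P_{(\ref{GeneralProblemDis})-(\ref{GeneralProblemDis:const5})}$ with the same $\hat v$; this is the inclusion that literally matches the standard meaning of ``at least as good'' for a minimization problem, and your verification of it (the $0/1$ structure of $p_{ij}$ making the two lower bounds on $z_{ij}$ sum to the single aggregated term, uniformly over all subsets $S$) is sound. Your closing remark that both inclusions hold, so the two LP bounds coincide, is exactly what reconciles the two directions and is consistent with the paper's later proposition that all three formulations produce the same LP bound. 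In short: correct proof, complementary inclusion to the one the paper writes down, and arguably the more faithful one for the statement as phrased.
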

\begin{proof}
Let $P_{(\ref{GeneralProblemDis1})-(\ref{GeneralProblemDis1:const2})}$ and $P_{(\ref{GeneralProblemDis})-(\ref{GeneralProblemDis:const5})}$ be the polyhedra defining the feasible domains of the continuous relaxation of formulations (\ref{GeneralProblemDis1})-(\ref{GeneralProblemDis1:const2}) and  (\ref{GeneralProblemDis})-(\ref{GeneralProblemDis:const5}), respectively.

Consider a feasible solution (possibly fractional) $(x,v,z)\in P_{(\ref{GeneralProblemDis})-(\ref{GeneralProblemDis:const5})}$. It follows that its projection onto $(x,v)$ belongs to $P_{(\ref{GeneralProblemDis1})-(\ref{GeneralProblemDis1:const2})}$, and the result follows.
\end{proof}

 The above result, together with the fact that formulation (\ref{GeneralProblemDis1})-(\ref{GeneralProblemDis1:const1}) has a smaller number of constraints and variables than formulation (\ref{GeneralProblemDis})-(\ref{GeneralProblemDis:const5}), justifies that in our computational experiments (see Section \ref{s:comp-res}) we only report results based on formulation (\ref{GeneralProblemDis1})-(\ref{GeneralProblemDis1:const1}) since its performance is superior to the one of (\ref{GeneralProblemDis})-(\ref{GeneralProblemDis:const5}).

\bigskip

\subsection{Valid formulations based on Hamming distance among profiles}\label{sec3_2}

In this section, we derive alternative valid formulations for the $k$-sum approval voting problem that do not make explicit use of all the possible subsets of $\{1,\ldots,n\}$ of cardinality $k$. For an arbitrary subset $S$ of $\{1,\ldots,n\}$, we consider the sum of the Hamming distances of all $P_i$, $i\in S$, to $x$ as follows

$$d_S(x)=\sum_{i\in S}\sum_{j=1}^m |x_j-p_{ij}|.$$

\noindent If there is no confusion, in the following, we simply write $d_i$ in place of $d_i(x)$.
\bigskip

\noindent For a given $k$, with $1\leq k<n$, the problem of electing a committee that minimizes the sum of the $k$ largest Hamming distances can be formulated as a Mixed Integer Linear Programming (MILP) problem, provided that for any given $x$ a permutation $\sigma_x$ such that $d_{\sigma_x(i)}(x)\ge d_{\sigma_x(i+1)}(x), \; i=1,\ldots,n-1$, is fixed.

\begin{align}\label{Second}
\min& \sum\limits_{h=1}^k d_{\sigma_x(h)}(x) \\
& z_{ij}\geq x_j(1-p_{ij})+p_{ij}(1-x_j) \quad i=1,\ldots,n,\;j=1,\ldots,m \noindent \label{Second:Const1}  \\
& d_i\geq\sum\limits_{j=1}^m z_{i j} \quad i=1,\ldots,n \noindent \label{Second:Const2}\\
& x_j\in \{0,1\} \quad j=1,\ldots,m. \noindent \label{Second:Const4}
\end{align}
\vskip 8 pt

Problem (\ref{Second})-(\ref{Second:Const4}) has $m$ binary variables, $nm$ continuous variables, and $O(nm)$ linear constraints.
\vskip 8 pt

This formulation is correct but it is not operational, since it depends on the valid permutation function $\sigma_x(\cdot)$ that sorts the distances in a non-increasing order. However, it is still possible to derive alternative valid formulations that do not make explicit use of that permutation (see  \cite{OT03,PuertoRodriguezChiaTamir2017}). Indeed, let us consider a new variable $t\ge 0$ and a set of $n$ variables $v_i$, $i=1,\ldots,n$.

\begin{theorem}
The following is a valid formulation for the general $k$-sum approval voting problem.
\begin{align}
\min\; & kt+ \sum_{i=1}^n v_i &  \label{for:obj}\\
\mbox{s.t. } & v_i\ge d_i-t & i=1,\ldots,n \label{k-centrum} \\
& d_i\ge \sum_{j=1}^m z_{ij} & i=1,\ldots,n\label{Hdistance0} \\
& z_{ij}\geq x_j(1-p_{ij})+p_{ij}(1-x_j) & i=1,\ldots,n,\;j=1,\ldots,m  \label{Hdistance1}\\
& x_j\in \{0,1\} & j=1,\ldots,m \label{x_var} \\
& t\ge 0, \; v_i\ge 0 &  i=1,\ldots,n. \label{Hdistance2}
\end{align}
\end{theorem}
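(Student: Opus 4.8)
The plan is to show that the optimal value of the proposed linear-in-$(t,v,d,z)$ model equals $\sum_{h=1}^k d_{\sigma_x(h)}(x)$ for the optimal committee $x$, by relying on the classical representation of the $k$-sum (a.k.a. $k$-centrum) of a finite list of numbers as a minimization over a single scalar $t$. Concretely, for any fixed nonnegative reals $d_1,\dots,d_n$ one has the identity
\begin{equation*}
\sum_{h=1}^k d_{\sigma(h)} \;=\; \min_{t\ge 0}\Big\{\, kt + \sum_{i=1}^n (d_i-t)^+ \,\Big\},
\end{equation*}
where $\sigma$ sorts the $d_i$ in non-increasing order and $(\cdot)^+=\max\{\cdot,0\}$; this is exactly the result from \cite{OT03,PuertoRodriguezChiaTamir2017} invoked in the text. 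First I would record this identity (or simply cite it), noting that the optimal $t$ can be taken to be the $k$-th largest value $d_{\sigma(k)}$, and that $(d_i-t)^+$ is linearized in the standard way by introducing $v_i\ge d_i-t$, $v_i\ge 0$ together with minimization, since at optimality $v_i=(d_i-t)^+$.

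Next I would argue the equivalence in two directions. For a fixed $x\in\{0,1\}^m$, the constraints \eqref{Hdistance1} force $z_{ij}\ge x_j(1-p_{ij})+p_{ij}(1-x_j)=|x_j-p_{ij}|$ (the last equality holding because both are $0/1$), and \eqref{Hdistance0} then forces $d_i\ge\sum_j z_{ij}\ge\sum_j|x_j-p_{ij}|=d_i(x)$; since the objective \eqref{for:obj} is nondecreasing in every $v_i$, hence (through \eqref{k-centrum}) nondecreasing in every $d_i$, an optimal solution sets $d_i=d_i(x)$ and $v_i=(d_i(x)-t)^+$, so for that $x$ the inner minimization over $(t,v,d,z)$ returns exactly $kt+\sum_i(d_i(x)-t)^+$, which by the $k$-centrum identity equals $\sum_{h=1}^k d_{\sigma_x(h)}(x)$. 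Minimizing over $x\in\{0,1\}^m$ then matches \eqref{Second}, and hence the original $k$-sum approval voting problem \eqref{First}/\eqref{general_3}. Conversely, any optimal committee $x^\star$ of the approval voting problem, together with $t=d_{\sigma_{x^\star}(k)}(x^\star)$, $d_i=d_i(x^\star)$, $z_{ij}=|x^\star_j-p_{ij}|$, $v_i=(d_i(x^\star)-t)^+$, is feasible in \eqref{for:obj}--\eqref{Hdistance2} with objective value equal to the optimum, establishing the reverse inequality.

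The only point requiring a little care — and the place I would expect a referee to look hardest — is the interplay between the monotonicity of the objective in $d_i$ and the fact that $d_i$ appears with a $\ge$ sign on two constraints (\eqref{k-centrum} and \eqref{Hdistance0}): one must check that pushing $d_i$ down to its lower bound $\sum_j z_{ij}$, and $z_{ij}$ down to $|x_j-p_{ij}|$, is always beneficial (or at worst neutral) for the objective, so that no spurious optimal solution with inflated $d_i$ or $z_{ij}$ can do better. This is immediate here because $t$ enters \eqref{k-centrum} with the opposite sign and the coefficient of $t$ in the objective is $k>0$, so the classical $k$-centrum linearization goes through verbatim; I would state this explicitly rather than leave it implicit. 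With that observation in hand the proof is a direct substitution argument, so I would keep it short — essentially "apply the $k$-centrum identity to $(d_1(x),\dots,d_n(x))$ and linearize, using $|x_j-p_{ij}|=x_j(1-p_{ij})+p_{ij}(1-x_j)$ for $0/1$ variables."
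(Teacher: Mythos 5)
Your proposal is correct and follows essentially the same route as the paper: the paper obtains the $kt+\sum_i v_i$ representation by writing the inner maximum of (\ref{general_3}) as the selection problem (\ref{Inner}), relaxing it via total unimodularity and taking the LP dual, which is exactly the $k$-centrum identity you invoke from \cite{OT03,PuertoRodriguezChiaTamir2017}. Your explicit check that $d_i$ and $z_{ij}$ can be pushed down to their lower bounds is a worthwhile detail the paper leaves implicit, but it does not change the argument.
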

\begin{proof}
Consider the general formulation (\ref{general_3}). Following the proof in \cite{PuertoRodriguezChiaTamir2017}, the inner maximum in problem (\ref{general_3}) is equivalent to the following

\begin{equation}\label{Inner}
\begin{array}{lcl}
\max\; & \sum\limits_{i=1}^n d_iq_i &\\
& \sum\limits_{i=1}^n q_i=k\\
& q_i\in \{0,1\} & i=1,\ldots,n.\\
\end{array}
\end{equation}

\noindent The above constraint matrix is TU and thus the integrality constraints on the variables $q_i$, $i=1,\ldots,n$, in problem (\ref{Inner}) can be relaxed to $0\leq q_i\leq 1$, $i=1,\ldots,n$, and the resulting problem has the following exact dual:

\begin{equation}\label{InnerDual}
\begin{array}{lcl}
\min\; & kt+ \sum\limits_{i=1}^n v_i  &\\
\mbox{s.t. } & v_i\ge d_i-t & i=1,\ldots,n \\
& v_i\geq 0 & i=1,\ldots,n.\\
\end{array}
\end{equation}

\noindent Notice that, since $d_i$ are distances, the coefficients in the objective function of (\ref{Inner}) are non negative and, w.l.o.g., we can set the variable $t$ as $t\geq 0$. To complete the proof, it suffices to add to the above dual problem the constraints (\ref{Hdistance0})-(\ref{x_var}).
\end{proof}

\noindent When a constraint on the number $C$ of candidates to be elected must be also considered, we can add it to the above program by condition

$$ \sum_{j=1}^m x_j\le C.$$

\vskip 8 pt
\noindent An alternative valid formulation for the general $k$-sum approval voting problem can be provided by exploiting the one proposed in \cite{PuertoBlanco2014}, as stated in the following theorem.

\begin{theorem}
The following is a valid formulation for the general $k$-sum approval voting problem.
\begin{align}
\min\; & \sum_{i=1}^n u_i+ \sum_{h=1}^k v_{h} & \label{for2:obj} \\
\mbox{s.t. } & u_i+v_{h}\ge d_i & i=1,\ldots,n, \;h=1,\ldots,k \label{for2:cons1}\\
& d_i\ge \sum_{j=1}^m z_{ij}& i=1,\ldots,n \label{for2:cons2}\\
& z_{ij}\geq x_j(1-p_{ij})+p_{ij}(1-x_j) & i=1,\ldots,n,\;j=1,\ldots,m \label{for2:cons3} \\
& x_j\in \{0,1\} &j=1,\ldots,m \label{for2:x_var} \\
&  u_i\ge 0 & i=1,\ldots,n \label{for2:v_var}\\
&  v_{h}\ge 0 & h=1,\ldots,k. \label{for2:varu}
\end{align}
\end{theorem}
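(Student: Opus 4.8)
The goal is to show that the MILP in (\ref{for2:obj})--(\ref{for2:varu}) computes $\min_{x\in\{0,1\}^m}\sum_{h=1}^k d_{\sigma_x(h)}(x)$, i.e. the sum of the $k$ largest Hamming distances. As in the previous theorem, constraints (\ref{for2:cons2})--(\ref{for2:cons3}) together with integrality of $x$ force $d_i$ to equal the Hamming distance $d_i(x)$ at any optimal solution (the $z_{ij}$ variables take the value $x_j(1-p_{ij})+p_{ij}(1-x_j)\in\{0,1\}$, and $d_i$ is pushed down to their sum by the minimization). So the real content is the claim that, for fixed nonnegative reals $d_1,\dots,d_n$,
\[
\min\Big\{\sum_{i=1}^n u_i+\sum_{h=1}^k v_h \;:\; u_i+v_h\ge d_i\ \forall i,h,\quad u_i\ge 0,\ v_h\ge 0\Big\}
\;=\;\sum_{h=1}^k d_{\sigma(h)},
\]
where $d_{\sigma(1)}\ge\dots\ge d_{\sigma(n)}$. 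I would isolate this as the key lemma and then combine it with the $d_i$-reduction exactly as in the proof of Theorem~\ref{th:form1}.

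**Proving the key inequality.** For the direction ``$\le$'': exhibit a feasible point attaining the value. Take $v_h=d_{\sigma(h)}$ for $h=1,\dots,k$ and $u_i=\max\{0,\ d_i-v_k\}=\max\{0,\,d_i-d_{\sigma(k)}\}$. Then $u_i\ge 0$, $v_h\ge 0$, and for each $i,h$ we have $u_i+v_h\ge (d_i-d_{\sigma(k)})+d_{\sigma(h)}\ge d_i$ using $d_{\sigma(h)}\ge d_{\sigma(k)}$; when $u_i=0$ we still have $u_i+v_h\ge v_k=d_{\sigma(k)}\ge d_i$ precisely when $d_i\le d_{\sigma(k)}$, which is the case whenever $u_i=0$. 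The objective value is $\sum_{h=1}^k d_{\sigma(h)}+\sum_{i}\max\{0,d_i-d_{\sigma(k)}\}=\sum_{h=1}^k d_{\sigma(h)}+\sum_{h=1}^{k}(d_{\sigma(h)}-d_{\sigma(k)})$… — this overshoots, so instead I would use the cleaner choice $v_h=d_{\sigma(h)}-d_{\sigma(k)}$ for $h<k$, $v_k=d_{\sigma(k)}$, $u_i=\max\{0,d_i-d_{\sigma(k)}\}$, giving objective $\sum_{h=1}^{k-1}(d_{\sigma(h)}-d_{\sigma(k)})+d_{\sigma(k)}+\sum_{h=1}^{k-1}(d_{\sigma(h)}-d_{\sigma(k)})$, still wrong. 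The right reference construction is the known one for the $k$-centrum (see \cite{PuertoBlanco2014,PuertoRodriguezChiaTamir2017}): the LP is exactly the dual of $\max\{\sum_i d_i q_i:\ 0\le q_i\le 1,\ \sum_i q_i=k\}$ after the substitution that splits the single multiplier into $k$ copies, so I would instead derive ``$\le$'' and ``$\ge$'' simultaneously by writing down that primal--dual pair and invoking strong LP duality together with the fact that the primal optimum is $\sum_{h=1}^k d_{\sigma(h)}$ (the LP is integral, so the optimal $q$ is the indicator of the $k$ largest $d_i$'s). Concretely: the inner maximization in (\ref{general_3}) equals $\max\{\sum_i d_i q_i : \sum_i q_i = k,\ q\in\{0,1\}^n\}$, whose relaxation is TU; its dual, with one free variable $t$ per the single equality and one variable per upper bound on $q_i$, can be algebraically rearranged — splitting $t$ into $v_1,\dots,v_k$ — into the form (\ref{for2:obj})--(\ref{for2:varu}).

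**Putting it together.** With the lemma in hand, the theorem follows by the same two moves as in Theorem~\ref{th:form1}: first, (\ref{general_3}) says the $k$-sum approval voting problem is $\min_{x\in\{0,1\}^m}\max\{d_S(x):|S|=k\}$, and $\max\{d_S(x):|S|=k\}=\sum_{h=1}^k d_{\sigma_x(h)}(x)$ is the sum of the $k$ largest distances; second, replace that inner maximum by the value of the LP from the lemma (with $d_i=d_i(x)$), which is legitimate because at optimality the added constraints (\ref{for2:cons2})--(\ref{for2:cons3}) realize $d_i(x)=\sum_j z_{ij}=\sum_j|x_j-p_{ij}|$ for the $0/1$ vector $x$. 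Adjoining (\ref{for2:cons2})--(\ref{for2:x_var}) to the LP and minimizing jointly over $x$ then yields precisely (\ref{for2:obj})--(\ref{for2:varu}).

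**Main obstacle.** The only delicate point is the lemma, i.e. verifying that the particular way the paper splits the dual of the $k$-selection LP into the variables $u_i$ and $v_h$ produces exactly the epigraph constraints $u_i+v_h\ge d_i$ with objective $\sum u_i+\sum v_h$, and that weak duality alone already gives the ``$\ge$'' bound while the explicit dual-optimal point gives ``$\le$''. I would lean on \cite{PuertoBlanco2014} for the clean form of this identity rather than re-derive the LP manipulations in detail; everything else (the $z_{ij}$/$d_i$ reduction, the passage from (\ref{general_3}) to the sum of the $k$ largest distances) is routine and mirrors the earlier proof verbatim.
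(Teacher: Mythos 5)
Your final argument is correct, but it reaches the formulation by a different route than the paper. The paper (following \cite{PuertoBlanco2014}) writes $\sum_{h=1}^k d_{\sigma_x(h)}(x)$ directly as the optimum of an assignment-type LP in variables $y_{ih}$ (problem (\ref{Inner_Blanco})), relaxes it by total unimodularity, and takes its dual, which produces the constraints $u_i+v_h\ge d_i$ and the objective $\sum_i u_i+\sum_h v_h$ in one step; the constraints (\ref{for2:cons2})--(\ref{for2:x_var}) are then adjoined exactly as you describe. You instead start from the dual of the $k$-selection LP $\max\{\sum_i d_iq_i:\sum_i q_i=k,\ 0\le q\le 1\}$ --- i.e.\ from the $kt+\sum_i v_i$ formulation of the preceding theorem --- and split the single variable $t\ge 0$ into $k$ copies $v_1,\dots,v_k$. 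That split does preserve the optimal value, but it is the one step you leave unverified: given $(u,v)$ feasible for (\ref{for2:cons1}), set $t=\min_h v_h\ge 0$ to get a feasible point of the $kt$-formulation with no larger objective (since $kt\le\sum_h v_h$), and conversely set $v_h=t$ for all $h$; this two-line check is what makes your derivation complete, and it has the side benefit of exhibiting explicitly how the two compact formulations are related (consistent with the paper's later proposition that they share the same LP bound). Your two attempted explicit feasible points indeed overshoot; the one that works is $v_h=d_{\sigma(k)}$ for all $h$ and $u_i=\max\{0,\,d_i-d_{\sigma(k)}\}$, which gives $u_i+v_h=\max\{d_i,d_{\sigma(k)}\}\ge d_i$ and objective $kd_{\sigma(k)}+\sum_{h=1}^{k}\bigl(d_{\sigma(h)}-d_{\sigma(k)}\bigr)=\sum_{h=1}^k d_{\sigma(h)}$, so an elementary primal--dual certificate is available if you prefer to avoid invoking integrality of the assignment polytope.
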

\begin{proof}
Consider the general formulation (\ref{general_3}). We introduce the following binary variables $y_{ih}$, $i=1,\ldots,n$ and $h=1,\ldots,k$, such that, given $x$, $y_{ih}=1$ if the distance $d_i(x)$ of voter $i$ is in position $h<k$ in the non-increasing ordering, and $y_{ih}=0$ otherwise. Following the proof in \cite{PuertoBlanco2014}, for a given vector $x$, the sum of the $k$ largest distances can be written

\begin{equation}\label{Inner_Blanco}
\begin{array}{rcll}
\sum\limits_{h=1}^k d_{\sigma_x(h)}(x)=\max & \sum\limits_{i=1}^n\sum\limits_{h=1}^k d_i y_{ih} & &\\
\\
\mbox{s.t. } & \sum\limits_{i=1}^n y_{ih}=1 & h=1,\ldots,k& \\
&\sum\limits_{h=1}^k y_{ih}=1 & i=1,\ldots,n &\\
\\
&y_{ih}\in \{0,1\} & i=1,\ldots,n,\; h=1,\ldots,k.&\\
\end{array}
\end{equation}

\noindent In fact, problem (\ref{Inner_Blanco}) is an assignment problem, so that we can relax the binary variables $0\leq y_{ih}\leq 1$. Taking the dual of (\ref{Inner_Blanco}) we obtain

\begin{equation}\label{InnerDual_Blanco}
\begin{array}{rcll}
\sum\limits_{h=1}^k d_{\sigma_x(h)}(x)=\min & \sum\limits_{i=1}^n u_i + \sum\limits_{h=1}^k v_i & &\\
\\
\mbox{s.t. } & u_i+v_{h} \geq d_i & i=1,\ldots,n,\; h=1,\ldots,k& \\
\\
&u_i,\, v_{h}\geq 0 & i=1,\ldots,n,\; h=1,\ldots,k.&\\
\end{array}
\end{equation}

\noindent To complete the proof, it suffices to add to the above dual problem the constraints (\ref{for2:cons2})-(\ref{for2:x_var}).

\end{proof}

\begin{proposition}
The formulations (\ref{GeneralProblemDis1})-(\ref{GeneralProblemDis1:const2}), (\ref{for:obj})-(\ref{Hdistance2}) and (\ref{for2:obj})-(\ref{for2:varu}) produce the same LP bound.
\end{proposition}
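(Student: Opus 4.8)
The plan is to show that all three formulations have the same LP relaxation optimal value by establishing that each continuous relaxation, when projected onto the variables that describe the Hamming scores, describes exactly the epigraph of the function that maps $x$ (and the auxiliary distance variables $d_i$) to the sum of the $k$ largest distances. Since the three formulations differ only in the way they encode ``the sum of the $k$ largest $d_i$ is at most $v$'', and all share the same description of the $d_i$ and $z_{ij}$ variables via constraints of the type $d_i\ge\sum_j z_{ij}$ and $z_{ij}\ge x_j(1-p_{ij})+p_{ij}(1-x_j)$, it suffices to prove that the three encodings of the $k$-sum function coincide on their common relaxed domain.

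First I would fix any feasible point $(x,z,d)$ of the relaxed common part (with $0\le x\le 1$) and recall the classical fact, already used implicitly in the proofs of the two preceding theorems, that for a nonnegative vector $d=(d_1,\dots,d_n)$ the quantity $\sum_{h=1}^k d_{\sigma(h)}$ (sum of the $k$ largest components) admits two dual LP representations: the Bertsimas–Tsitsiklis / $k$-centrum one, $\min\{kt+\sum_i v_i: v_i\ge d_i-t,\ v_i\ge0,\ t\ge0\}$, and the assignment-based one of \cite{PuertoBlanco2014}, $\min\{\sum_i u_i+\sum_{h=1}^k v_h: u_i+v_h\ge d_i,\ u_i\ge0,\ v_h\ge0\}$. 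Both equal $\max\{\sum_i d_i q_i: \sum_i q_i=k,\ 0\le q_i\le1\}$ by LP duality over the (totally unimodular) feasibility polytopes appearing in \eqref{Inner} and \eqref{Inner_Blanco}. This immediately gives that formulations \eqref{for:obj}--\eqref{Hdistance2} and \eqref{for2:obj}--\eqref{for2:varu} have the same LP value, because minimizing $v$ (resp. the two sums) over the extra auxiliary variables yields, for each feasible $(x,z,d)$, the common value $\sum_{h=1}^k d_{\sigma(h)}$, and then the outer minimization is over the identical set of $(x,z,d)$.

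Next I would handle formulation \eqref{GeneralProblemDis1}--\eqref{GeneralProblemDis1:const2}, whose LP relaxation constrains $v\ge \sum_j\gamma_j(S)(1-x_j)+\sum_j(k-\gamma_j(S))x_j$ for every $S$ with $|S|=k$, i.e. $v\ge d_S(x)$ for all such $S$, where $d_S(x)=\sum_{i\in S}\sum_j|x_j-p_{ij}|$ evaluated affinely in $x$ via \eqref{GenDis1}. The key point is that $\max_{|S|=k} d_S(x)=\max\{\sum_i q_i\,\delta_i(x): \sum_i q_i=k,\ 0\le q_i\le1\}$ where $\delta_i(x)=\sum_j(x_j(1-p_{ij})+p_{ij}(1-x_j))$ is the affine-in-$x$ expression for voter $i$'s distance; since the constraint set $\{\sum q_i=k,0\le q_i\le1\}$ has all integral vertices, the LP maximum is attained at a 0/1 vector, i.e. at some subset $S$ of size $k$. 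Hence the LP relaxation of \eqref{GeneralProblemDis1}--\eqref{GeneralProblemDis1:const2} is exactly $\min_{0\le x\le1}\sum_{h=1}^k\delta_{\sigma_x(h)}(x)$. Finally I would observe that in the relaxations of the other two formulations one can, without changing the optimal value, set $d_i=\delta_i(x)$: the constraints only force $d_i\ge\sum_j z_{ij}\ge\delta_i(x)$, and since the objective is nondecreasing in each $d_i$ (larger $d_i$ can only increase the $k$-sum they must dominate), an optimal relaxed solution may be taken with $z_{ij}=x_j(1-p_{ij})+p_{ij}(1-x_j)$ and $d_i=\delta_i(x)$. Therefore all three LP relaxations reduce to the same optimization problem $\min_{0\le x\le1}\sum_{h=1}^k\delta_{\sigma_x(h)}(x)$ and hence return the same bound.

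The main obstacle is the last reduction step: one must argue carefully that pushing $z_{ij}$ and $d_i$ down to their affine lower envelopes is without loss of optimality in the LP relaxation, i.e. that the $k$-sum objective (in each of its dual encodings) is monotone nondecreasing in the $d_i$'s, so that shrinking them cannot hurt. This is intuitively clear — the $k$-centrum value and the assignment-dual value are both monotone in $d$ — but writing it down cleanly, simultaneously for all three formulations and including the auxiliary variables $t,v_i,u_i,v_h$, is the only place where a small amount of care is needed; everything else is bookkeeping plus the two LP-duality identities already invoked earlier in the paper.
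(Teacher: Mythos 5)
Your proposal is correct and follows essentially the same route as the paper's own proof: fix $\hat x\in[0,1]^m$ and show that in each of the three continuous relaxations the inner optimization over the auxiliary variables evaluates to $\sum_{h=1}^k d_{\sigma_{\hat x}(h)}(\hat x)$, via the two LP-duality identities already established and the integrality of the polytope $\{q:\sum_i q_i=k,\ 0\le q\le 1\}$. The only difference is one of detail: you make explicit the monotonicity argument allowing $z_{ij}$ and $d_i$ to be pushed down to their affine lower envelopes, a step the paper leaves implicit in the phrase ``from our discussion above, it is clear that.''
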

\begin{proof}
Let us consider a generic $\hat x\in [0,1]^m$. From our discussion above, it is clear that, fixing $\hat x$, the objective function value of all problems (\ref{GeneralProblemDis1})-(\ref{GeneralProblemDis1:const2}), (\ref{for:obj})-(\ref{Hdistance2}) and (\ref{for2:obj})-(\ref{for2:varu}) equals $\sum\limits_{h=1}^k d_{\sigma_{\hat x(h)}}(\hat x)$, where $d_{\sigma_{\hat x(1)}}(\hat x)\ge \ldots \ge d_{\sigma_{\hat x(n)}}(\hat x)$. Hence, the three problems return the same objective function value for each feasible solution of the continuous polytope and therefore this proves the claim.
\end{proof}

\section{Strengthening the formulations}\label{sec4}

The above MIP formulations are exact but still one can observe that they have some GAP at the root node of the Branch \& Bound tree (see Section \ref{s:comp-res}) although this gap is always rather small. The goal of this section is to develop some preprocessing strategies and valid inequalities that allow to improve the polyhedral description of the different formulations, and get a better bound for this GAP with a consequent gain in computational times.
\vskip 8 pt
\noindent First of all, we advance an easy preprocessing that allows fixing some variables either to zero or to one before the global search starts.
The rationale behind that is as follows. For a candidate $j$ to be member of at least one committee it is required that, at least for a subset $S$ of size $k<n$, there is a majority of voters that approves him/her, that is, $\gamma_j(S)\geq \lfloor k/2 \rfloor$. Therefore, if the total number of voters preferring candidate $j$, $\gamma_j$, satisfies $\gamma_j\ge n-\lfloor k/2 \rfloor$ then this candidate will be always included in any committee and thus we can set $x_j=1$. This justifies (\ref{pre:n1}). On the other hand, if candidate $j$ is only preferred by less than $\lfloor k/2 \rfloor$ voters, she/he will never be included in a $k$-sum committee and thus $x_j=0$. This justifies (\ref{pre:n2})
\begin{align}
& x_j=1&\text{if }\gamma_j\ge n-\lfloor k/2\rfloor \label{pre:n1} \\
& x_j=0&\text{if }\gamma_j\le \lfloor k/2\rfloor \label{pre:n2}.
\end{align}
Note that this preprocessing is more interesting for large values of $k$'s.
\newline

In the following we also develop a procedure for the efficient solution of the $\hat{k}$-sum approval voting problem for $\hat{k}$ fixed that works in an iterative fashion starting from $k=n$ and solving a $k$-sum approval voting problem for all $k=n,\ldots,\hat{k}$.  This procedure is based on valid inequalities involving optimal objective function values of  $k$-and-$(k+1)$-sum approval voting problems for any $k$.

\noindent First, we  analyze whether inequalities (\ref{GeneralProblem:const1})-(\ref{GeneralProblem:const}) can be adapted to the formulations described in Section \ref{sec3_2}, as valid cuts.  Clearly, they are valid inequalities but, as we will see, they do not improve such formulations.

Consider, first, formulation (\ref{for:obj})-(\ref{Hdistance2}). Note that the cuts in (\ref{GeneralProblem:const1}),  (\ref{GeneralProblem:const2}) and (\ref{GeneralProblem:const}) consist in aggregated forms of constraints (\ref{Hdistance1}), (\ref{k-centrum}) and (\ref{Hdistance0}), respectively.

\begin{align*}
z_{ij}\ge p_{ij}(1-x_j) \forall i,j \Rightarrow \sum_{i\in S}z_{ij}\ge \gamma_j(S)(1-x_{j})\quad \forall j\\
z_{ij}\ge (1-p_{ij})x_j \forall i,j \Rightarrow \sum_{i\in S}z_{ij}\ge (k-\gamma_j(S))x_{j}\quad \forall j
\end{align*}
Hence, the use of them as valid inequalities is not useful. Furthermore $\sum\limits_{j=1}^m\sum\limits_{i\in S}z_{ij}\le kt+\sum\limits_{i\in S} v_i$ can be obtained by means of a natural aggregation of (\ref{k-centrum}) and (\ref{Hdistance0}).
\newline



In light of the above results, now we develop valid inequalities based on solutions of the $k$-sum approval voting problem for different $k$ values to be used in a strategy that solves problems for different $k$ consecutively.

In order to present the result some additional notation is required. For a given $k$, let us denote by $z(k)$ and $x(k)$ the optimal objective function value and an optimal solution of the $k$-sum approval voting, respectively.

\begin{proposition}\label{pro:cuts}
For a given instance $(N,A,P)$ of the $k$-sum approval voting problem the following inequality holds
$$z(k)\ge \frac{k}{k+1} z(k+1).$$
\end{proposition}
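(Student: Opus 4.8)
The plan is to relate the optimal value $z(k)$ of the $k$-sum problem to $z(k+1)$ by exhibiting, for any fixed committee $x$, a lower bound on the sum of the $k+1$ largest Hamming distances in terms of the sum of the $k$ largest ones, and then optimizing over $x$. First I would fix an arbitrary committee $x\in\{0,1\}^m$ and let $d_{\sigma_x(1)}(x)\ge d_{\sigma_x(2)}(x)\ge\cdots\ge d_{\sigma_x(n)}(x)$ be its sorted vector of Hamming distances to the $n$ voters, as in formulation (\ref{general_3}). Writing $D_k(x)=\sum_{h=1}^k d_{\sigma_x(h)}(x)$ for the $k$-sum objective at $x$, the key elementary observation is that the average of the top $k$ distances is at least the average of the top $k+1$ distances, because the $(k+1)$-st largest distance $d_{\sigma_x(k+1)}(x)$ is no larger than each of the first $k$, hence no larger than their average $D_k(x)/k$. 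Concretely, $D_{k+1}(x) = D_k(x) + d_{\sigma_x(k+1)}(x) \le D_k(x) + D_k(x)/k = \frac{k+1}{k}D_k(x)$, which rearranges to $D_k(x)\ge \frac{k}{k+1} D_{k+1}(x)$.

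Next I would pass from this pointwise inequality to the optimal values. Let $x(k)$ be an optimal solution for the $k$-sum problem, so $z(k)=D_k(x(k))$. Applying the pointwise bound at $x=x(k)$ gives $z(k)=D_k(x(k)) \ge \frac{k}{k+1} D_{k+1}(x(k)) \ge \frac{k}{k+1} z(k+1)$, where the last step uses that $x(k)$ is a feasible committee for the $(k+1)$-sum problem (both problems share the feasible set $\{0,1\}^m$, possibly with the same cardinality constraint $\sum_j x_j = C$ if one is imposed, so feasibility is preserved) and $z(k+1)$ is the minimum of $D_{k+1}$ over that set. This chain yields exactly $z(k)\ge \frac{k}{k+1}z(k+1)$.

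I do not anticipate a serious obstacle here: the argument is essentially the observation that $d_{\sigma_x(k+1)}(x)\le \frac1k\sum_{h=1}^k d_{\sigma_x(h)}(x)$ together with weak duality/optimality of $z(k+1)$ over the common feasible region. The only points requiring a little care are (i) ensuring the feasible sets of the $k$- and $(k+1)$-sum problems coincide so that $x(k)$ is admissible in the $(k+1)$-sum problem — this holds because the only difference between the two problems is the number of summed distances, not the constraints on $x$ — and (ii) noting that $k<n$ guarantees $d_{\sigma_x(k+1)}(x)$ actually exists. Both are immediate, so the proof is short.
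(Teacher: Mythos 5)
Your proposal is correct and follows essentially the same route as the paper: both arguments rest on the observation that $d_{\sigma_x(k+1)}(x)$ is at most the average $D_k(x)/k$ of the top $k$ distances, combined with the feasibility of $x(k)$ for the $(k+1)$-sum problem, which gives $z(k+1)\le z(k)+z(k)/k$ and hence the claimed bound after rearranging. The only cosmetic difference is that you state the inequality pointwise for arbitrary $x$ before specializing to $x(k)$, whereas the paper works directly at $x(k)$.
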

\begin{proof}
By definition, $z(k)$, is the sum of the $k$ largest Hamming distances of the voters' profiles with respect to $x(k)$. It means that distance in position $k+1$, $d_{(k+1)}(x(k))$, satisfies
$$0\leq d_{(k+1)}(x(k))\leq \frac{z(k)}{k}.$$
Thus, we can conclude that $x(k)$ is a feasible solution for the $(k+1)$-sum problem and moreover, there exists  an upper bound for $z(k+1)$ given by

\begin{align}
z(k+1)\le z(k)+ \frac{z(k)}{k}\label{UB}.
\end{align}

The above expression gives a lower bound for $z(k)$, provided that $z(k+1)$ is known

\begin{align}
z(k)\ge \frac{k}{k+1} z(k+1)\label{LB}.
\end{align}

\end{proof}

Since, for a given $(N,A,P)$ we can solve the different $k$-sum voting problems in any order, after the above result, it is advisable to do it following the non-increasing sequence $k=n,\dots,1$. Indeed, as shown in Section \ref{Sec2}, solving the problem for $k=n$ (i.e., the minisum problem) is polynomial. Once this solution and objective function value are found, they can be used to improve the solution for $k=n-1$ and so on.

The following is an iterative scheme for efficiently solving the $k$-sum approval voting problem following the strategies illustrated above in this section. This approach has been effectively used in our computational experiments.
\begin{enumerate}
\item
Solve the problem for $k=n$, i.e. the minisum problem. Its optimal solution, $x(n)$, is easily seen to be
\begin{align}
& x_j=1&\text{if }\gamma_j\ge n-\lfloor n/2\rfloor  \\
& x_j=0&\text{if }\gamma_j< \lfloor n/2\rfloor
\end{align}
Next, obtain $z(n)$, the optimal objective function value of this problem.
\item
From $k=n-1$ to $k = 1$ set the following valid inequalities:
$$ \frac{k}{k+1} z(k+1)\le z(k)\le z(k+1)-d_{(k+1)}(x(k)).$$
\end{enumerate}
From the discussion above, it is clear that after solving the problem with $W(k+1)$ we have the lower bound on $z(k)$ (\ref{LB}) that we can use as a valid inequality when solving the problem with weighting vector $W(k)$. On the other hand, (\ref{UB}) provides an upper bound on $z(k)$ by $z(k+1)$.
We will show in our computational experiments section that the improvements obtained by the application of these strategies are remarkable (see Section \ref{s:comp-res}).

%

\section{Computational results\label{s:comp-res}}

This section reports on the results of an exhaustive computational test carried out on two sets of instances and our three formulations with and without improvements. We have tested data sets generated according to the scheme proposed in \cite{LeGrand_et_al_2002}. That paper distinguishes between \emph{uniform} and \emph{biased} data. The former refers to equal probability  distribution for 0 and 1 in the profiles, whereas the latter indicates different probabilities for them. Overall, we have solved 22750 instances with the different combinations for $n$, $m$, $k$ and uniform and biased data.


In a preliminary analysis, we wanted to test the performance of our three formulations in instances of moderate size ($n=50$ and different values for $m=30, 35,40,45,50,55,60$), in order to make the decision of which are the formulations to be tested with larger instance sizes.  Tables \ref{tablen50} and \ref{tablen50biased} report the average results of all the 1750 instances for $n=50$ for the uniform and biased data (5 randomly generated instances for each $m$ and $k=1,...,50$). The results are organized as follows. Each row reports information for a different formulation that is indicated by its references. For each formulation we include information about average and maximum time (Time(s)), average and maximum percent gap at the root node (GAP (\%)), average and maximum number of nodes in the searching tree (Nodes), percentage of instances solved  at the root node (\%Solved root) and percentage of binary variables fixed with the preprocessing (\% Fixed).

\begin{table}[h]
\begin{center}
{\small
\begin{tabular}{c|cc|cc|cc|c|c}
&  \multicolumn{2}{c|}{Time(s)} & \multicolumn{2}{c|}{GAP (\%)} & \multicolumn{2}{c|}{Nodes} & \%Solved & \%Fixed\\
Form. &Avg  &	Max 	& Avg 	&Max 	& Avg 	& Max 	& root  \\ \hline
(\ref{GeneralProblemDis1})-(\ref{GeneralProblemDis1:const2})& 65.63 & 4396.86 & 0.22 & 2.63& 7388.30 & 1092091.00& 15.89&13.92\\
(\ref{for:obj})-(\ref{Hdistance2})& 0.13& 2.24&	0.22& 2.63&	276.67&	 23170&	 62.74&13.92\\
(\ref{for2:obj})-(\ref{for2:varu})& 0.58& 19.99& 0.22&2.63& 1128.03& 158152.00 & 54.17&13.92\\
\end{tabular}
}
\end{center}
\caption{Summary for $n=50$ for uniform data\label{tablen50}}
\end{table}

\begin{table}[h]
\begin{center}
{\small
\begin{tabular}{c|cc|cc|cc|c|c}
&  \multicolumn{2}{c|}{Time(s)} & \multicolumn{2}{c|}{GAP (\%)} & \multicolumn{2}{c|}{Nodes} & \%Solved & \%Fixed\\
Form. &Avg  &	Max 	& Avg 	&Max 	& Avg 	& Max 	& root  \\ \hline
(\ref{GeneralProblemDis1})-(\ref{GeneralProblemDis1:const2})&12.34	&	 258.85	 &	 0.35	&	3.12	&	973.61	&	52028	&	 17.83&	29.42\\
 (\ref{for:obj})-(\ref{Hdistance2})& 0.06	&	0.40	&	0.35	&	 3.13	 &	 15.76	&	959	&	68.91&29.42	\\
 (\ref{for2:obj})-(\ref{for2:varu})& 0.17	&	1.23	&	0.35	&	 3.13	 &	 71.33	&	10914	&	54.86&	29.42\\
\end{tabular}
}
\end{center}
\caption{Summary for $n=50$ for biased data \label{tablen50biased}}
\end{table}

For the formulation (\ref{GeneralProblemDis1})-(\ref{GeneralProblemDis1:const2}), we also report information on the average and maximum number of cuts ($\#$ Cuts), and the average and maximum number of cuts in each node ($\#$ Cuts node). This information is relevant to understand the number of constraints, out of the exponentially many in the formulation, which are needed to have a valid representation of the problem in each node of the Branch \& Bound tree.

\begin{table}
\begin{center}
\begin{tabular}{c|cccc}
Data & Avg \# Cuts& Max \# Cuts & Avg \# Cuts node & Max \# Cuts node \\ \hline Uniform &97963.15 & 9428791 & 124.43 & 463\\
Biased & 7390.41 & 315094 & 63.80 & 444
\end{tabular}
\end{center}
\caption{Valid inequalities generated in a Branch \& Bound tree for solving formulation (\ref{GeneralProblemDis1})-(\ref{GeneralProblemDis1:const2}) with $n=50$ and for uniform and biased data \label{tablen50Cuts}}
\end{table}


We have also tested empirically,  see Table \ref{tablen50Cuts}, that the gap at the root node coincides for the three formulations. This confirms that the three formulations are equivalent in terms of LP gap  and reports about the rather small integrality gaps of these formulations.
\bigskip

The conclusion of the above tables is that formulation (\ref{GeneralProblemDis1})-(\ref{GeneralProblemDis1:const2}) is as stronger as the other two in terms of gap, but its performance is inferior in terms of time and number of problems solved at the root node. For this reason, we have decided to carry out the final test for larger instances only with formulations (\ref{for:obj})-(\ref{Hdistance2}) and (\ref{for2:obj})-(\ref{for2:varu}).
\bigskip

Next, we compare the performance of formulations (\ref{for:obj})-(\ref{Hdistance2}) and (\ref{for2:obj})-(\ref{for2:varu}) for the instances with $n=100$. Tables \ref{tablen100} and \ref{tablen100biased} report our results for the two types of data sets i.e., uniform and biased. All the information is organized as in previous Tables \ref{tablen50} and \ref{tablen50biased}.

\begin{table}[h]
\begin{center}
\begin{tabular}{rr|rr|rr|rr|c|c}
                  &&\multicolumn{2}{c|}{Time(s)}&\multicolumn{2}{c|}{GAP (\%)}&\multicolumn{2}{c|}{Nodes}&\%Solved& \% Fixed \\
			&Form.&	Avg	&	Max&	Avg 	&	Max	&	Avg&	Max	& root&	 \\
\hline
$m$=30	&	(\ref{for2:obj})-(\ref{for2:varu})	&	3.35	&	23.66	&	 0.17	 &	 0.84	&	1565.53	&	 28751	&	46.0 & 9.32	\\
	&	(\ref{for:obj})-(\ref{Hdistance2})	&	0.37	&	4.04	&	 0.17	 &	 0.84	&	1488.07	&	 36793	 &	46.4 &	 9.32\\
\hline
$m$=35	&	(\ref{for2:obj})-(\ref{for2:varu})&	4.39	&	55.32	&	 0.17	 &	 2.50	&	2695.97	&	 126951	&	43.8 & 8.87	 \\
	&	(\ref{for:obj})-(\ref{Hdistance2})	&	0.48	&	10.47	&	 0.17	 &	 2.50	&	2334.13	&	 96381	 &	44.8 &8.87	 \\
\hline
$m$=40	&	(\ref{for2:obj})-(\ref{for2:varu})&	5.40	&	93.11	&	 0.14	 &	 1.14	&	4149.20	&	 158828	&	44.4 & 8.42	 \\
	&	(\ref{for:obj})-(\ref{Hdistance2})	&	0.70	&	12.53	&	 0.14	 &	 1.14	&	3674.94	&	 121300	 &	45.0 &8.42	 \\
\hline
$m$=45	&	(\ref{for2:obj})-(\ref{for2:varu})	&	13.40	&	251.75	&	 0.13	 &	 2.00	&	11463.60	 &	 657000	&	42.6 & 9.44	 \\
	&	(\ref{for:obj})-(\ref{Hdistance2})	&	1.56	&	65.41	&	 0.13	 &	 2.00	&	9924.58	&	 578547	 &	43.6 & 9.44	 \\
\hline
$m$=50	&	(\ref{for2:obj})-(\ref{for2:varu})	&	18.30	&	532.31	&	 0.12	 &	 0.93	&	19495.08	 &	 781787	&	43.6 & 12.38	 \\
	&	(\ref{for:obj})-(\ref{Hdistance2})	&	2.48	&	78.21	&	 0.12	 &	 0.93	&	17922.55	&	 605694	&	45.2 &	 12.38\\
\hline
$m$=55	&	(\ref{for2:obj})-(\ref{for2:varu})	&	57.27	&	1652.28	&	 0.10	 &	 0.49	&	53060.66	 &	 1875622	&	 41.0 & 7.55	\\
	&	(\ref{for:obj})-(\ref{Hdistance2})	&	5.12	&	131.90	&	 0.10	 &	 0.49	&	45278.16	&	 1256747	&	40.2	 &7.55 \\
\hline
$m$=60	&	(\ref{for2:obj})-(\ref{for2:varu})	&	41.62	&	2306.34	&	 0.11	 &	 1.56	&	68105.88	 &	 7640315	&	 40.0& 10.30	\\
	&	(\ref{for:obj})-(\ref{Hdistance2})	&	5.78	&	382.24	&	 0.11	 &	 1.56	&	47840.94	&	 4036987	&	39.6 &10.30	 \\
\end{tabular}
\end{center}
\caption {Summary for $n$=100 for uniform data \label{tablen100}}
\end{table}

\begin{table}[h]
\begin{center}
\begin{tabular}{rr|rr|rr|rr|c|c}
                  &&\multicolumn{2}{c|}{Time(s)}&\multicolumn{2}{c|}{GAP (\%)}&\multicolumn{2}{c|}{Nodes}&\%Solved & \%Fixed \\
			&Form.&	Avg	&	Max&	Avg 	&	Max	&	Avg&	Max	& root	& \\
\hline
$m$=30	&	(\ref{for2:obj})-(\ref{for2:varu})	&	0.58	&	1.96	&	 0.30	 &	 3.13	&	48.83	&	 2215	&	48.2 & 28.33	\\
		&(\ref{for:obj})-(\ref{Hdistance2})	&	0.08	&	0.42	&	 0.30	 &	 3.13	&	44.13	&	 2013	&	48.6 &28.33	 \\
\hline
$m$=35	&	(\ref{for2:obj})-(\ref{for2:varu})&	0.63	&	2.02	&	 0.28	 &	 2.63	&	77.93	&	 4147	&	43.6 & 30.07	 \\
		&(\ref{for:obj})-(\ref{Hdistance2})	&	0.10	&	0.93	&	 0.28	 &	 2.63	&	81.37	&	 6788	&	45.6 &30.07	 \\
\hline
$m$=40	&	(\ref{for2:obj})-(\ref{for2:varu})	&	0.76	&	2.47	&	 0.25	 &	 2.38	&	193.00	&	 9921	&	42.0 &26.62	\\
		&(\ref{for:obj})-(\ref{Hdistance2})	&	0.13	&	0.91	&	 0.25	 &	 2.38	&	177.40	&	 8989	&	48.4 &26.62	 \\
\hline
$m$=45	&	(\ref{for2:obj})-(\ref{for2:varu})	&	0.69	&	2.23	&	 0.24	 &	 2.17	&	145.96	&	 8026	&	42.4 &27.24	\\
		&(\ref{for:obj})-(\ref{Hdistance2})	&	0.12	&	0.81	&	 0.24	 &	 2.17	&	121.17	&	 5985	&	46.0 &27.24	 \\
\hline
$m$=50	&	(\ref{for2:obj})-(\ref{for2:varu})	&	0.69	&	2.93	&	 0.23	 &	 2.00	&	263.15	&	 15166	&	44.6 &29.21	\\
		&(\ref{for:obj})-(\ref{Hdistance2})	&	0.13	&	0.93	&	 0.23	 &	 2.00	&	202.81	&	 6125	&	50.2&29.21	 \\
\hline
$m$=55	&	(\ref{for2:obj})-(\ref{for2:varu})	&	0.81	&	6.33	&	 0.21	 &	 1.23	&	497.78	&	 25672	&	 46.6&28.69	\\
		&(\ref{for:obj})-(\ref{Hdistance2})	&	0.17	&	3.33	&	 0.21	 &	 1.23	&	443.27	&	 26845	&	52.2 &28.69	 \\
\hline
$m$=60	&	(\ref{for2:obj})-(\ref{for2:varu})	&	1.06	&	39.10	&	 0.22	 &	 1.67	&	1187.28	&	 221519	&	35.4 & 28.51	\\
		&(\ref{for:obj})-(\ref{Hdistance2})	&	0.26	&	24.92	&	 0.22	 &	 1.67	&	1136.49	&	 234681	&	39.0 &28.51	 \\

\end{tabular}
\end{center}
\caption {Summary for $n$=100 for biased data} \label{tablen100biased}
\end{table}

\begin{table}[h]
\begin{center}
\begin{tabular}{rr|rr|rr|rr|c|c}
                  &&\multicolumn{2}{c|}{Time(s)}&\multicolumn{2}{c|}{GAP (\%)}&\multicolumn{2}{c|}{Nodes}&\%Solved& \%Fixed \\
			&Form.&	Avg	&	Max&	Avg 	&	Max	&	Avg&	Max	& root	& \\
\hline
$m$=30	&	(\ref{for:obj})-(\ref{Hdistance2})	&	0.44	&	9.12	&	 0.12	 &	 1.85	&	819.95	&	 35683	&	60.6 & 29.83	\\
\hline
$m$=35	&	(\ref{for:obj})-(\ref{Hdistance2})	&	2.80	&	39.15	&	 0.12	 &	 1.59	&	7962.11	&	 162887	&	38.5 & 14.09	\\
\hline
$m$=40	&	(\ref{for:obj})-(\ref{Hdistance2})	&	6.27	&	225.75	&	 0.10	 &	 1.59	&	22843.11	&	 1165252	&	 56.1& 24.90	\\
\hline
$m$=45	&	(\ref{for:obj})-(\ref{Hdistance2})&	21.63	&	985.73	&	0.09	 &	 1.33	&	76974.33	&	 4847789	&	48.6& 27.74	 \\
\hline
$m$=50	&	(\ref{for:obj})-(\ref{Hdistance2})&	42.09	&	1129.91	&	0.08	 &	 0.93	&	143305.51	&	 4607534	&	41.7& 24.76	 \\
\hline

$m$=55 & (\ref{for:obj})-(\ref{Hdistance2}) &	50.93	&	5070.65	&	0.09	 &	 1.61	&	186926.77	&	28259032	&	40.7& 20.79\\
\hline
$m$=60 & (\ref{for:obj})-(\ref{Hdistance2}) &	49.97	&	5409.05	&	0.08	 &	 0.81	&	207304.41	&	29489376	&	56.8& 31.45 \\

\end{tabular}
\end{center}
\caption {Summary for $n$=200 for biased data} \label{tablen200biased}
\end{table}

\begin{table}[h]
\begin{center}
\begin{tabular}{rr|rr|rr|rr|c|c}
                  &&\multicolumn{2}{c|}{Time(s)}&\multicolumn{2}{c|}{GAP (\%)}&\multicolumn{2}{c|}{Nodes}&\%Solved & \%Fixed \\
			&Form.&	Avg	&	Max&	Avg 	&	Max	&	Avg&	Max	& root	 \\
\hline
$m$=30 	&	(\ref{for:obj})-(\ref{Hdistance2})	&	1.20	&	11.01	&	 0.15	&	 0.93	&	4302.39	&	60713	&	28.27 &	 7.44\\
\hline
$m$=35	&	(\ref{for:obj})-(\ref{Hdistance2})	&	2.74	&	45.11	&	 0.15	&	 2.38	&	11338.78	&	237964	&	 30.80&6.97	\\
\hline
$m$=40	&	(\ref{for:obj})-(\ref{Hdistance2})	&	7.87	&	138.90	&	 0.13	&	 0.62	&	36490.51	&	768328	&	 28.80&7.43	\\
\hline
$m$=45	&	(\ref{for:obj})-(\ref{Hdistance2})	&	12.64	&	302.69	&	 0.12	&	 1.92	&	56540.63	&	1886774	&	 27.33&7.61	\\
\hline
$m$=50 	&	(\ref{for:obj})-(\ref{Hdistance2})	&	44.67	&	777.99	&	 0.11	&	 0.89	&	211780.07	&	4787806	&	 28.67&7.43	\\
\hline
$m$=55	&	(\ref{for:obj})-(\ref{Hdistance2})	&	81.99	&	4068.90	&	 0.10	&	 1.08	&	419709.27	&	27576236	&	 23.87&7.48	\\
\hline
$m$=60 	&	(\ref{for:obj})-(\ref{Hdistance2})	&	209.87	&	6664.97	&	 0.10	&	 0.76	&	1023055.10	&	36286662	&	 24.67& 6.99	\\

\end{tabular}
\end{center}
\caption {Summary for $n$=150 for uniform data} \label{tablen150unbiased}
\end{table}

The conclusions from Tables \ref{tablen100} and \ref{tablen100biased}  are the following. Formulation (\ref{for:obj})-(\ref{Hdistance2}) is one order of magnitude faster than (\ref{for2:obj})-(\ref{for2:varu}). For instance,
the average time for the largest instance sizes ($n=100, m=60$), solved with (\ref{for:obj})-(\ref{Hdistance2}), is of $5.78$ seconds and the maximum cpu time was 382.24 seconds. The same instances solved with (\ref{for2:obj})-(\ref{for2:varu}) take an average time of $41.62$ and a maximum of $2306.34$ seconds.
This fact can be explained by the smaller number of variables and constraints that are needed in formulation (\ref{for:obj})-(\ref{Hdistance2}) with respect to (\ref{for2:obj})-(\ref{for2:varu}). The remaining factors (GAP, Nodes,  \%Solved  and \% Fixed) are quite similar in both cases. In fact, as we have seen theoretically, both formulations are equivalent in terms of LP gap and they always fix the same number of binary variables. It is also very interesting to test the practical performance of a na\"ive approximation algorithm based on using the solutions of the linear relaxation, as proposed for instance in Amanatidis et al \cite{Amanatidis_et_al2015}. One can easily bound from above the empirical performance ratio, $\frac{LPval}{optval}$, of any of our formulations based on the relative gap ($\displaystyle Rgap:=\frac{optval - LPval}{optval}* 100 \%$). Indeed, $\frac{optval}{LPval}\le \frac{100}{100-Rgap}$. Actually, since the largest relative gap is below 3.13\% (see Table 4), this results, in the worst case ($n=100$, $m=30$), with an empirical performance ratio bounded above by $1.03$.

Next, we have tested our best formulations, namely (\ref{for:obj})-(\ref{Hdistance2}), in order to explore the size limit that can be solved within 7200 seconds. Tables \ref{tablen200biased} and \ref{tablen150unbiased} report our results. As can be observed in these tables, there is a difference in the performance with respect to uniform and biased data. For the uniform data we get to the time limit already for $n=150$, whereas for the biased data we are able to solve to optimality all instances for $n=200$. The reason for this clear difference rests on the fact that the preprocessing, (\ref{pre:n1}) and (\ref{pre:n2}), is much more efficient for biased data where many variables are fixed either to zero or to one. Indeed this percentage is on average of $ 24.79\%$ for biased data with $n=200$ as compared to only $7.34\%$ for the uniform data for $n=150$.



Finally, Figures \ref{fig:time-k} and \ref{fig:root-node} show, for the 35 instances with $n=100$ (5 randomly generated instances for each value of $m\in \{30,35,40,45,50,55,60\}$), the computing time for solving the problem (Figure \ref{fig:time-k}) and the number of instances solved at the root node, without branching (Figure \ref{fig:root-node}), for the different $k=1,\ldots,100$, and uniform data.  We have observed that the behavior is similar for the different values of $n$. For that reason, we have only included those corresponding to $n=100$.

\begin{figure}
\begin{subfigure}{.5\textwidth}
  \centering
  \includegraphics[width=7cm]{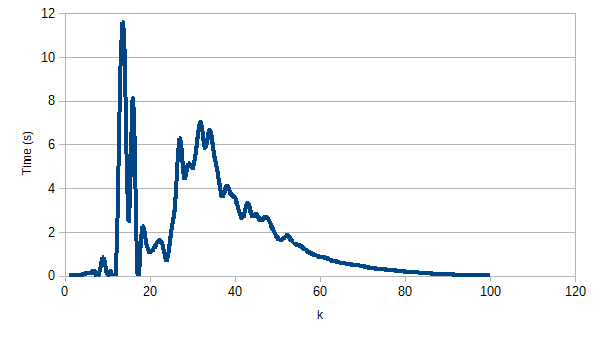}
  \caption{Time vs. $k$ for uniform data $n=100$}
  \label{fig:time-k}
\end{subfigure}%
\begin{subfigure}{.5\textwidth}
  \centering
  \includegraphics[width=7cm]{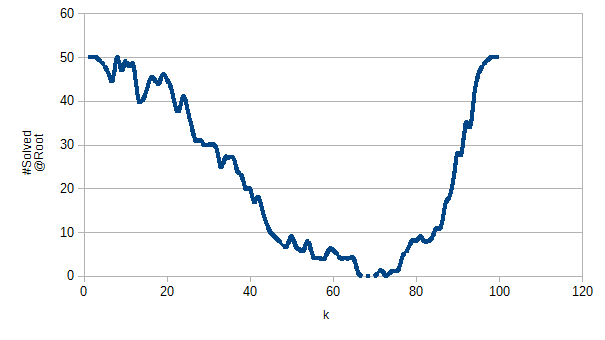}
  \caption{Solved instances at root node for uniform data $n=100$}
  \label{fig:root-node}
\end{subfigure}
\begin{subfigure}{.5\textwidth}
  \centering
  \includegraphics[width=7cm]{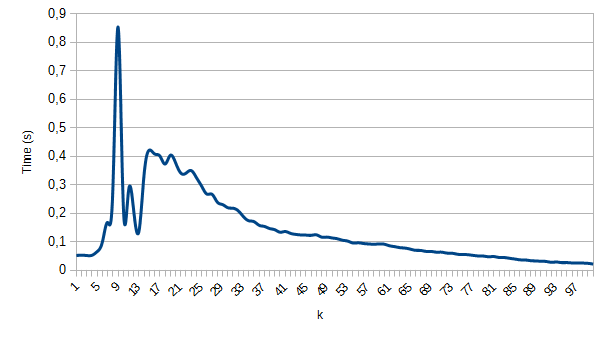}
  \caption{Time vs. $k$ for biased data $n=100$}
  \label{fig:timebiased-k}
\end{subfigure}
\begin{subfigure}{.5\textwidth}
  \centering
  \includegraphics[width=7cm]{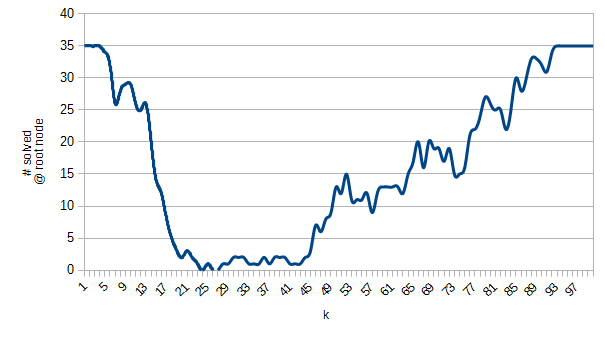}
  \caption{Solved instances at root node for biased data $n=100$}
  \label{fig:root-nodebiased}
\end{subfigure}
\caption{Plots of the efficiency of formulation (\ref{for:obj})-(\ref{Hdistance2})}
\label{fig:fig}
\end{figure}

\bigskip


Analogously, Figures \ref{fig:timebiased-k} and \ref{fig:root-nodebiased} show for the biased data instances with $n=100$ and for the different $k=1,\ldots,100$, the computing time for solving the problem (Figure \ref{fig:timebiased-k}) and the number of instances solved at the root node, without branching (Figure \ref{fig:root-nodebiased}).

 Analyzing the figures we conclude that the behavior is rather similar for uniform and biased data. Regarding computing time for solving the problems we observe that it increases when $k$ decreases from $k=n$ until a certain threshold (which depends of the type of data, namely $k \in (15,30)$ for uniform and $k\in (9,20)$ for biased) and then the time decreases with $k$ until $k=1$. This general trend can be explained from the combinatorics of the objective function which relates to $n\choose k$. With respect to the number of instances solved at the root node, the performance is also similar. This number decreases with $k$ from $k=n$ until a certain value (which again depends on the type of data, $k\approx 70$ and $k\approx 28$, for uniform and biased data, respectively) and then it increases with $k$ up to $k=1$.

\section{Concluding Remarks}\label{sec6}

To conclude the paper, we resort to the problem of minimizing the sum of the $h$ smallest Hamming distances already introduced in Section \ref{Sec2}. In \cite{Amanatidis_et_al2015} this problem has been already considered in the approval voting application context, and the authors prove that, when the OWA vector is non-decreasing, that is, the weighting vector is of the form $M(n-h)=(0,\ldots,0,1,\ldots,1)$, with $h$ the number of ones, $1\leq h < n$, the winning committee can be found in polynomial time for a fixed value of $h$. They suggest an enumerative approach based on the solution of ${{n}\choose{n-h}}={{n}\choose{h}}$ minisum problems that is obviously not efficient even for a fixed $h$, and not polynomial if $h$ is part of the input.
As already done in Section \ref{Sec2} for the minisum problem, here we can provide a proof based on Linear Programming that formally justifies the polynomial time approach in \cite{Amanatidis_et_al2015}.
\medskip

\noindent For a fixed $h$, the general problem can be stated as

\begin{equation}\label{general_Min}
\begin{array}{lcl}
\min\limits_{x\in \{0,1\}^m} \big(\min \big\{  d_{S}(x) \; |\; S\subset \{ 1,\ldots,n \}, |S|=h \big\} \big).\\
\end{array}
\end{equation}

\noindent We can switch the two $min$ operators, thus obtaining

\begin{equation}\label{general_Min2}
\begin{array}{lcl}
\min\limits_{S\subset \{ 1,\ldots,n \}, |S|=h} \big(\min\limits_{x\in \{0,1\}^m} \big\{ d_{S}(x) \big\} \big).\\
\end{array}
\end{equation}

\noindent For a given subset $S\subset \{ 1,\ldots,n \}$, the inner minimum in problem (\ref{general_Min2}) corresponds to the minisum problem

\begin{align*}
\min & \; \sum_{j=1}^m \gamma_j(S)(1-x_j)+\sum_{j=1}^m (h-\gamma_j(S))x_j \\
\mbox{s.t. \;} & 0\le x \le 1, \nonumber
\end{align*}

\noindent which is polynomially solvable. Then, considering all the ${{n}\choose{n-h}}={{n}\choose{h}}$ subsets of cardinality $n-h$, for a fixed $h$, the problem can be solved by a sequence of LPs.
\medskip

It is worth remarking that the application of this problem to approval voting elections is meaningful. In fact, the problem can be stated as: elect a committee minimizing the sum of the $h$ smallest Hamming distances from the voters profiles. As already observed in \cite{Amanatidis_et_al2015}, the application is significant for small values of $n-h$, say $n-h=1$ or $n-h=2$. Actually, in these cases, the assumption is that the first one or two maximum distances do not play a significant role in the selection of the committee, and this is true especially when the population of voters is extremely large. The idea is that there will be always some voters whose preferences are completely disjoint from those of the majority of the others. This is, in fact, a way of considering such voters' profiles as {\em outliers}.
But, in our opinion, there are additional cases in which the application is meaningful, namely, for every choice of $h$ such that $n-h\leq \frac{n}{2}-1$. Under this condition, the approval voting problem consists of taking into account only the preferences of the absolute majority of the voters ($h \geq \frac{n}{2}+1$), with the aim of selecting the committee corresponding to the boolean vector $x^*$ for which the sum of the Hamming distances w.r.t. the $h$ considered profiles is minimized.


\noindent Note that, the two problems following the OWA approaches for approval voting studied in this paper can be seen as two different ways of facing the same problem, but giving more importance to one of the two principles that are at the basis of any democratic election. The problem with weighting vector $W(k)=(1,\ldots,1,0,\ldots,0)$ implements the idea that \emph{representation} must be maximized.

If, on the other hand, one wants to give more importance to \emph{governability}, the minimin approach (with weighting vector $M(n-h)=(0,\ldots,0,1,\ldots,1)$) can be pursued with a suitable choice of $h$, since it is able to guarantee a strong and cohesive consensus. This strength can be enforced by increasing the value of $h$. We leave the choice of which is the best voting rule for a country to its lawmakers, who, according to the specific political and social situation in which the election takes place, will be able to choose the best rule.
\vskip 12 pt
Going back to our theoretical considerations, to the best of our knowledge, the computational complexity of the minmin problem with weighting vector $M(n-h)=(0,\ldots,0,1,\ldots,1)$ when $h$ is part of the input is still an open problem. In our opinion this is an interesting issue that will be the focus of our future work.




\section*{Acknowledgement}

 This research has been partially supported by Spanish Ministry of Economy and Competitiveness/FEDER grants numbers MTM2013-46962-C02-01 and MTM2016-74983-C02-01. This paper has been written during a sabbatical leave of the second author in La Sapienza, Universit\'a di Roma whose support is also acknowledged.

\end{document}